\newtheorem{theorem}{Theorem}[subsection]
\newtheorem{lemma}[theorem]{Lemma}
\newtheorem{proposition}[theorem]{Proposition}
\theoremstyle{remark}
\newtheorem{remark}[theorem]{Remark}
\numberwithin{equation}{subsection}
\gdef\mnote#1{\marginpar{\tiny
 \tolerance\@M\spaceskip2.6\p@ plus10\p@ minus.9\p@\rm#1}}}
\def\Dg:{\endgraf{\bf Dg:\enspace}\ignorespaces}
\let\Bbb\mathbb
\def\D{\Delta}
\def\G{\Gamma}
\def\S{\Sigma}
\def\SS{\mathcal S}
\def\SSS{\mathbb S}
\def\T{\mathbb T}
\def\P{\mathbb P}
\def\sm{\smallsetminus}
\newcommand{\be}{\begin{equation}}
\newcommand{\ee}{\end{equation}}
\let\ge\geqslant 
\let\le\leqslant 
\let\til\widetilde
\def\Z{\Bbb Z}
\def\R{\Bbb R}
\def\C{\Bbb C}
\def\Rp#1{\Bbb{RP}^{#1}}
\def\conj{\operatorname{conj}}
\def\dsum{\bot\!\!\!\bot}
\let\i=r
\let\j=s
\newcommand{\addresseshere}{%
  \enddoc@text\let\enddoc@text\relax
}
\title[Affine Cubics]{On Affine Real Cubic Surfaces}
\author[]
{S.~Finashin, V.~Kharlamov}
\address{Middle East Technical University,
Department of Mathematics\endgraf Ankara 06531 Turkey}
\email{serge@metu.edu.tr}
\address{Universit\'{e} de Strasbourg et IRMA (CNRS)\endgraf 7 rue Ren\'{e}-Descartes 67084 Strasbourg Cedex, France}
\email{kharlam@math.unistra.fr}
\keywords{Real affine cubic surfaces, Deformation classification, Wall-crossing}
\subjclass[2020] {Primary: 14R05. Secondary: 14P25, 14J10, 14J26}
\begin{document}
\begin{abstract} We prove that the space of affine, transversal at infinity, non-singular real cubic surfaces has 15 connected components.
We give a
topological criterion to distinguish them and show also how
these 15 components are adjacent to each other via wall-crossing.
\end{abstract}
\maketitle

\vskip-4mm
\setlength\epigraphwidth{.47\textwidth}
\epigraph{On r\'esout les probl\`emes qu'on se pose
et non les probl\`emes qui se posent.}{ Henri Poincar\'e \\}

\section{Introduction}

\subsection{Main task}
We consider an affine 3-space as a chart  $\P^3\- \P^2$ of a projective space $\P^3$ with a fixed hyperplane $\P^2$.
Accordingly, by
an {\it affine cubic surface transversal at infinity}
we mean the complement $X\-A$ where  $X\subset \P^3$ is a projective cubic surface transversal to $\P^2$
and  $A=X\cap\P^2$.
Occasionally  we refer to affine cubics as to pairs $(X,A)$.

The space of non-singular affine cubic surfaces transversal at infinity
is $\P^{19}\-(\D\cup\D')$, where $\P^{19}$ is the space of projective cubic surfaces,
$\D\subset \P^{19}$ is the hypersurface formed by singular surfaces, and $\D'$ is the hypersurface formed by
surfaces which are not transversal to $\P^2$.

Our main objective is to classify up to deformation the
real non-singular affine cubic surfaces transversal at infinity. These surfaces form the real part
$\P^{19}_\R\-(\D_\R\cup\D'_\R)$ of $\P^{19}\-(\D\cup\D')$.
We declare two such surfaces
{\it deformation equivalent} if they belong to the same connected component of $\P^{19}_\R\-(\D_\R\cup\D'_\R)$.

Often, it is convenient to work with a larger space of surfaces, $\P^{19}_\R\-(\overset{\boldsymbol{.}}\Delta_\R\cup\overset{\boldsymbol{.\phantom{a}}}{\Delta'_\R})$,
where the semi-algebraic hypersurface $\overset{\boldsymbol{.}}\Delta_\R\subset \P^{19}_\R$ (resp. $\overset{\boldsymbol{.\phantom{a}}}{\Delta'_\R}\subset \P^{19}_\R$)
is formed by real affine cubic surfaces with a real singular point (resp. not transversal to $\P^2$ at some real point).
Since the both spaces,  $\P^{19}_\R\-(\overset{\boldsymbol{.}}\Delta_\R\cup\overset{\boldsymbol{.\phantom{a}}}{\Delta'_\R})$ and $\P^{19}_\R\-(\D_\R\cup\D'_\R)$, are open in Euclidean topology
and differ by a codimension 2 semi-algebraic set, this does not change the equivalence relation.

\subsection{Deformation classification}

Recall that, for every real algebraic $M$-surface $X$, there exists a
quadratic $\Z/4$-valued function (called {\it Rokhlin-Guillou-Marin quadratic function}) which is
defined on the kernel of the inclusion homomorphism $H_1(X_\R;\Z/2) \to H_1(X_\C; \Z/2)$ and takes value
$2\in\Z/4$
on each real vanishing cycle (see, {\it e.g.} \cite{DK}).

\begin{theorem}\label{AffineSurfDef} There are fifteen deformation classes of real affine
non-singular and transversal at infinity cubic surfaces $X\-A$.
 For all but two exceptional classes, such surfaces
are deformation equivalent if and only if their real parts $X_\R\-A_\R$ are homeomorphic.
The two exceptional classes are those for which:
\begin{itemize}\item
$X$ is an  M-surface, that is $\chi(X_\R)= -5$,
\item
$A$ is an M-curve, that is  $A_\R$ has two components,
\item
both components of $A_\R$ give non-zero classes in $H_1(X_\R)$.
\end{itemize}
The number of real lines intersecting an oval-component $O\subset A_\R$
is 12 for one of these exceptional classes and 16 for another.
The Rokhlin-Guillou-Marin quadratic function $q:H_1(X_\R)\to\Z/4$ takes value
$q(O)=2$ in the first case and $q(O)=0$ in the second.
\end{theorem}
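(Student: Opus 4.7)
The plan is to combine a topological enumeration, a wall-crossing argument on the real discriminant, explicit constructions, and a refined invariant in the one exceptional case. First I would compile the list of possible homeomorphism types of pairs $(X_\R, A_\R)$. Starting from the known classification of smooth real cubic surfaces ($X_\R$ has five topological types, distinguished by $\chi(X_\R)\in\{-5,-3,-1,1,3\}$) and of smooth real plane cubics ($A_\R$ has one or two components), I would enumerate, for each combination, the distinct embedding types of $A_\R\hookrightarrow X_\R$: which component of $A_\R$ sits in which component of $X_\R$, which bound in $X_\R$, and the classes they represent in $H_1(X_\R;\Z/2)$. Using that $A_\R$ is a hyperplane section, and so represents a specific class in $X_\R$, one narrows this down to a short list of candidate topological types of $X_\R\sm A_\R$.

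To bound the number of deformation classes from above I would analyze the real discriminant $\overset{\boldsymbol{.}}\Delta_\R\cup\overset{\boldsymbol{.\phantom{a}}}{\Delta'_\R}$ wall by wall. Its generic strata correspond either to the appearance of a real node on $X$ or to a real simple tangency between $X$ and $\P^2$; in each case I would write down the two local models of the pair $(X_\R, A_\R)$ on either side of the wall (a Morse surgery on $X_\R$ in the first case, a birth/death or handle-slide move of $A_\R$ inside $X_\R$ in the second) and record how the pair invariants and the value of $q$ on the relevant classes transform. Tracking this data, I would merge chambers that share the same refined invariants and are linkable by admissible crossings, thereby confirming the upper bound of fifteen. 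In parallel, to realize every class I would exhibit explicit models — cubic surfaces obtained by small real perturbations of a union of three real planes in general position, or by blowing up six real points of $\P^2$ in suitable real configurations — together with hyperplane sections producing each predicted topological type.

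It remains to separate the two exceptional classes, where $(X_\R,A_\R)$ has identical topology: $X_\R$ an M-surface, $A_\R$ an M-curve, and both components of $A_\R$ non-zero in $H_1(X_\R;\Z/2)$. The oval $O\subset A_\R$ then lies in the kernel of $H_1(X_\R;\Z/2)\to H_1(X_\C;\Z/2)$, so the Rokhlin-Guillou-Marin value $q(O)\in\Z/4$ is defined and is a deformation invariant. I would compute $q(O)$ by expressing $O$ in $H_1(X_\R;\Z/2)$ as a sum of classes of real lines of $X$ plus a class on which $q$ is known, and applying the quadratic-form identity relating $q$ on a sum to individual values and mutual intersection numbers. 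This reduces the computation to counting real lines of $X$ meeting $O$, and one finds precisely the values $(q(O),\#\text{lines})=(2,12)$ and $(0,16)$ in the two cases.

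The main obstacle, in my view, is the wall-crossing bookkeeping of the second step: listing the walls is mechanical, but showing that every pair of chambers sharing the non-exceptional invariants can actually be joined by a sequence of admissible crossings — and, crucially, that no such sequence joins the two exceptional chambers — requires a careful global traversal of the chamber adjacency graph, with $q$-tracking along the way.
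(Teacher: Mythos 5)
Your outline identifies the right invariants (topology of $(X_\R,A_\R)$, real lines meeting the oval, the Rokhlin--Guillou--Marin value $q(O)$), but the central step is missing: you never actually prove that any two affine cubics with the same invariants are deformation equivalent, i.e.\ that the count is exactly fifteen. ``Merging chambers that share the same refined invariants and are linkable by admissible crossings'' is not an argument — wall-crossing data tells you how invariants jump when you \emph{do} cross a wall, but it gives no mechanism for showing that two surfaces with homeomorphic real parts lie in the \emph{same} chamber of $\P^{19}_\R\sm(\D_\R\cup\D'_\R)$; for that you need a connectedness statement for each putative class, and you concede yourself that this ``global traversal'' is the main obstacle without supplying it. The paper's proof does not proceed by chamber bookkeeping at all: for connected $X_\R$ it passes to blow-up models $(\P^2,C,S)$ (a real plane cubic with a typical real 6-tuple), reduces connectedness of a class to connectedness of the space of such configurations with prescribed distribution of points on the components of $C_\R$ (deforming through at worst one-nodal 6-tuples, where one only touches, not crosses, the discriminant — Lemma \ref{through-nodal}), identifies the configuration strata via quadratic Cremona transformations (Proposition \ref{cubic-to-plane.equiv}), handles disconnected $X_\R$ by a separate convexity/B\'ezout argument (Lemma \ref{Petrovsky-hyperbolic}, Proposition \ref{3-classes}), and finally upgrades the resulting \emph{coarse} classification (up to possibly orientation-reversing affine maps) to genuine deformation classes by exhibiting a reflection-invariant representative in each class (Proposition \ref{strong-achirality}). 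None of these ingredients, nor substitutes for them, appear in your plan.

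Two further points. First, the numbers $12$ and $16$ are not something you can simply ``find'': in the paper they come out of an explicit count in the blow-up model (Lemmas \ref{C-enumeration}, \ref{oval-comparison}, Proposition \ref{line-count}), combined with the Cremona identifications that show these are well defined on the class; your sketch gives no route to these values. Second, your proposed computation of $q(O)$ by writing $O$ as a sum of classes of real lines needs the values of $q$ on those lines and their mutual intersections, which you do not supply; the paper instead exhibits concrete sections of a perturbed Cayley cubic in which the oval is visibly a vanishing cycle (so $q(O)=2$) or a sum of two disjoint vanishing cycles (so $q(O)=0$). Your approach to this last point could be made to work, but as written the proposal neither establishes the upper bound of fifteen nor the stated numerical data, so it does not constitute a proof.
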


The topological types of $X_\R\-A_\R$ used as a classification invariant in Theorem \ref{AffineSurfDef}
are determined by $X_\R$ (five columns in Table 1), the number of components of $A_\R$ (one or two circles)
and the number of components of $X_\R\-A_\R$ (one, two, or three).
 In Table 1 we list possible combinations and give the corresponding number of deformation classes of $(X,A)$
in each case.

\begin{table}[h!]
\caption{The number of real deformation classes of affine cubics $X\sm A$
depending on the topology of $X_\R$ (columns) and $A_\R$ (rows).
Here $\T^2$ stands for $S^1\times S^1$.}
\scalebox{0.9}{\boxed{
\begin{tabular}{c|c|c|c|c|c}
$X_\R$&$\Rp2\#3\T^2$&$\Rp2\#2\T^2$&$\Rp2\#\T^2$&$\Rp2$&
$\Rp2\+\SSS^2$\\
\hline
$A_\R =  S^1$&1&1&1&1&1\\
$A_\R = S^1\+S^1,\  b_0(X_\R\sm A_\R)=$\scalebox{0.92}{$\begin{cases}1\\2\\3\end{cases}$}&
$\begin{matrix}2\\1\\0\end{matrix}$&$\begin{matrix}1\\1\\0\end{matrix}$&
$\begin{matrix}1\\1\\0\end{matrix}$&
$\begin{matrix}0\\1\\0\end{matrix}$&
$\begin{matrix}0\\1\\1\end{matrix}$
\end{tabular}}}
\end{table}

More concretely:

\begin{itemize}\item
Each of the five classes of $X$ contains precisely
one deformation class of affine cubics $X\-A$ with connected real locus $A_\R$.
\item
The class of $X$ with $X_\R=\Rp2$ contains precisely one deformation class of $X\- A$ with
2-component $A_\R$.
\item
Each of the two classes of $X$ with $X_\R=\Rp2\#k(S^1\times S^1)$, $k=1,2$, contains
precisely
two classes of $X\-A$ with 2-component $A_\R$: for one class the oval of $A_\R$
is null-homologous in $X_\R$ and for another is not.
\item
The class of $X$ with $X_\R=\Rp2\#3(S^1\times S^1)$ contains precisely 3 classes of $X\-A$
with 2-component $A_\R$;  for one class the oval of $A_\R$
is null-homologous in $X_\R$ and for the other two
is not.
For one of the latter two classes the oval is homologous to a real vanishing class in $H_1(X_\R)$
and for another is not.
\item
The class of $X$ with $X_\R=\Rp2\+S^2$ contains precisely two classes of $X\-A$ with
2-component $A_\R$: for one class the components of $A_\R$ lie in different components of $X_\R$,
and for another in the same component $\Rp2$.
\end{itemize}

\subsection{Adjacency of deformation classes}
Two deformation classes are said to be {\it adjacent}
if they meet along
a maximal dimension stratum of $\D_\R \sm \D'_\R$.
These strata are formed by those
transversal at infinity
hypersurfaces which have a node and no other singular points, and they
constitute
the connected components of the smooth part of $\D_\R \sm \D'_\R$.
We call these strata {\it walls} and name the  graph representing the above adjacency relation
the {\it wall-crossing graph}.

We depict the vertices of this graph by circles
labeled inside with the number of real lines intersecting $A_\R$. If $A_\R$ is connected,  it is just the total number of real lines on $X$. If  not, then it is a pair of numbers indicating the number of lines
intersecting the one-sided in $\P^2$ component of $A_\R$ and the two-sided component called {\it the oval}.
The edges are decorated with the number of real lines intersecting
the node that appears at the instance of wall-crossing.
Following the same convention as for vertices, when
$A_\R$ has 2 connected components this number is split into a pair.
In addition, we color in black the vertices representing $(X,A)$ with 2-component $A_\R$ whose oval
bounds a disc in the non-orientable component of $X_\R$.

\begin{theorem}\label{TheoremB}
The wall-crossing graph for affine cubics is as shown on
Fig. \ref{Af2-Graph}: its left-hand-side corresponds to $(X,A)$ with one-component curves $A_\R$,
and the right-hand-side to $(X,A)$ with
two-component $A_\R$.
\end{theorem}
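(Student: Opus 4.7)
The plan is to go wall by wall: classify the local types of the walls in $\D_\R\sm\D'_\R$, determine the topological modification induced by crossing each type, and match the resulting adjacencies against the $15$ classes of Theorem~\ref{AffineSurfDef}. A wall surface $X_0$ is a real projective cubic, transversal to $\P^2$, with exactly one singular point, which is an ordinary real node of one of two types: (a) solitary, modelled by $x^2+y^2+z^2=0$, or (b) hyperbolic, modelled by $x^2+y^2=z^2$; moreover in case (b) the node either lies on $A_\R=X_\R\cap\P^2_\R$ or off it. Crossing an (a)-wall creates or absorbs a small sphere in $X_\R$, disjoint from $A_\R$; crossing a (b)-wall off $A_\R$ performs a standard Morse surgery on $X_\R$ along a pair of points; crossing a (b)-wall at a point of $A_\R$ performs simultaneous local surgeries on $X_\R$ and on the plane curve $A_\R$.

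For each of the $15$ classes I would enumerate the walls that can be reached by varying the defining cubic, and identify the class on the opposite side using the invariants of Theorem~\ref{AffineSurfDef}: the topology of $X_\R$, the number of components of $A_\R$, the number of components of $X_\R\sm A_\R$, and, in the two exceptional M-cases, the class $[O]\in H_1(X_\R;\Z/2)$ of the oval together with its Rokhlin-Guillou-Marin value $q(O)\in\Z/4$. The vertex labels (real lines meeting $A_\R$) and the edge labels (real lines through the node) are obtained from the $27$ lines of the nearby smooth cubic by tracking, through the degeneration, which of the six lines on $X_0$ passing through the node become real after smoothing and by continuity of the remaining $21$ lines across the wall; the splitting into two numbers when $A_\R$ is disconnected is read off from which component of $A_\R$ each real line meets.

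The main obstacle will be the two exceptional classes separated only by $q(O)$: since $q$ is not a function of the homeomorphism type of $(X_\R,A_\R)$, it must be transported across walls \via\ its description as a $\Z/4$-count of real vanishing cycles. I would verify on explicit nodal degenerations that a hyperbolic wall attached to the oval can shift $q(O)$ by $2$, and that this is the mechanism that places the two exceptional classes at their respective positions in the graph. A subsidiary obstacle, more routine but bookkeeping-heavy, is to establish exhaustiveness: each displayed edge must be realised by a concrete one-parameter real family of cubics crossing the corresponding wall (constructed from known real normal forms of projective cubics together with admissible choices of the plane at infinity), while any adjacency not shown in Fig.~\ref{Af2-Graph} must be ruled out by the invariance arguments of the second paragraph.
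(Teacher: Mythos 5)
Your wall-type classification does not match the stratification actually used. Walls are the components of the smooth part of $\D_\R\sm\D'_\R$, so a wall surface must stay transversal to the plane at infinity; its node therefore lies in the affine part, off $\P^2$ and in particular off $A_\R$. A cubic whose node sits on $\P^2$ belongs to $\D'$ and is a codimension-two phenomenon, not a wall. Consequently your third crossing type (``simultaneous local surgeries on $X_\R$ and on $A_\R$'') never occurs, every wall crossing leaves $A$ untouched, and the mechanism you propose for shifting $q(O)$ ``via a hyperbolic wall attached to the oval'' has no wall to support it. It is also not needed for Theorem \ref{TheoremB}: the two exceptional classes are separated at the level of vertices by the number of real lines meeting the oval ($12$ versus $16$, Propositions \ref{line-count} and \ref{8-classes}), which is part of the vertex labels, and they can never be joined by an edge anyway, since crossing a wall changes $\chi(X_\R)$ by $2$ while both classes have $\chi(X_\R)=-5$.

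The genuine gap is the step you call bookkeeping: deciding exactly which pairs of the fifteen classes are joined by a wall. ``Invariance arguments'' based on the invariants of Theorem \ref{AffineSurfDef} do not by themselves exclude a conceivable edge, and realizing the displayed edges with the correct split labels is not routine. The paper handles this with three tools you do not have: (i) the forgetful map $(X,A)\mapsto X$ projects the affine graph onto the known projective wall-crossing graph of Theorem \ref{SurfDef}, and on the connected-$A_\R$ half this projection is an isomorphism because it is bijective on vertices (Proposition \ref{A-2comp}), so the left-hand side of Figure \ref{Af2-Graph} comes for free; (ii) for the two-component half, a monotonicity observation (when $\chi(X_\R)$ increases across a wall, the number of real lines drops by $2k$ with $k$ the number of real lines through the node, and each of the labels $a,b$ decreases) constrains which vertices an edge can join; (iii) existence of each displayed edge is produced not from normal forms of cubic surfaces but from the family statement (Theorem \ref{generalized-family-statement}) together with known transversal arrangements of a conic and a cubic, i.e.\ the model $f_2+f_3=0$ of Lemma \ref{transversality}, in which the real lines through the node correspond to the real intersection points of the conic with the cubic at infinity, distributed over its two components exactly as the edge label prescribes. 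Your idea of tracking the $27$ lines through the degeneration could in principle substitute for (ii), but you would still need to extract from it a constraint of this kind and to construct nodal cubics realizing every prescribed split label, which in effect brings you back to such conic--cubic configurations; as written, the proposal leaves both the exclusion and the realization of edges unproved.
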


\vskip-3mm
\begin{figure}[h!]
\caption{The wall-crossing graph for transversal at infinity affine cubic surfaces}\label{Af2-Graph}
\hbox{\includegraphics[width=1.2\textwidth]{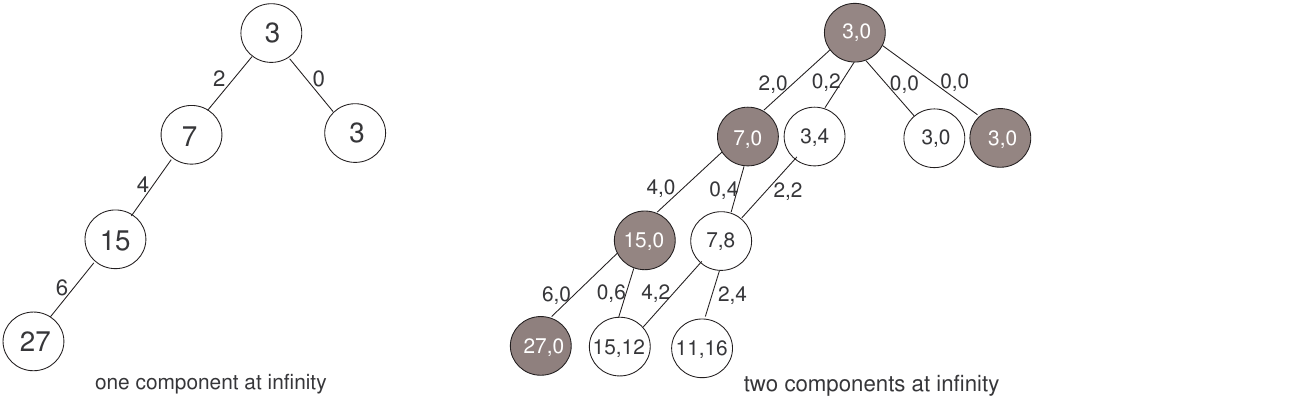}}
\end{figure}
\vskip-1mm

Note that
there may be several walls that separate the same pair of deformation classes and thus, represent the same edge on the graph of Fig. \ref{Af2-Graph} (see
details in Section \ref{ordinary-walls}).
For instance, it is so for
a pair of walls which are adjacent to the same codimension 1 cuspidal stratum of $\D_\R \sm \D'_\R$
(representing $A_2$-singularity on a cubic). This motivated to consider {\it extended walls}
that we define as the connected components
of $(\D_\R \sm \D'_\R)\cup (\D^c_\R\sm \D'_\R)$ where $\D^c$ is the union of the cuspidal strata of $\D$.
We establish that the correspondence between the edges of the graph on Fig. \ref{Af2-Graph}
and extended walls is in fact bijective by combining Theorem \ref{TheoremB} with
the following one.

\begin{theorem}\label{A2-edges-quasisimplicity}
\!Each edge of the graph on Fig.\,\ref{Af2-Graph}
represents just one extended wall.
\end{theorem}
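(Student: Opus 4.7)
The plan is to enumerate the extended walls and set up a bijection with the edges of the graph in Fig.\,\ref{Af2-Graph}. Recall that an extended wall is a connected component of the real locus of the nodal-plus-cuspidal part of $\Delta \setminus \Delta'$. At a real $A_2$-singular point the real versal unfolding has semi-cubical-cusp discriminant, so the nodal stratum acquires two real local branches meeting at each such cuspidal point; consequently every extended wall is obtained from one or more ordinary nodal walls by gluing together the pairs that are locally adjacent to a common $A_2$-cuspidal stratum. Since Theorem \ref{TheoremB} already asserts surjectivity of the assignment (extended wall) $\mapsto$ (edge), the statement reduces to proving injectivity.

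First I would enumerate, up to deformation, the real nodal affine cubics $(X,A)$ transversal at infinity whose only singularity is a single real ordinary double point. This is a wall-level analogue of Theorem \ref{AffineSurfDef}: the invariants are the homeomorphism type of $(X_\R,A_\R)$, the local type of the node (solitary real point versus genuine crossing), the placement of the node relative to $A_\R$ (how many real lines through the node meet each component of $A_\R$), and in the two exceptional chambers of Theorem \ref{AffineSurfDef} the value of the Rokhlin-Guillou-Marin quadratic function on the relevant vanishing cycle. For each nodal deformation class, the two real smoothings of the node produce an ordered pair of smooth chambers, giving the map (nodal wall) $\mapsto$ (edge of the graph).

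Next I would carry out the parallel enumeration at the $A_2$-cuspidal stratum, using the invariants of $(X_\R,A_\R)$ together with the local cusp type. The essential local fact is that at a real $A_2$-point the two real branches of the discriminant consist of nodal surfaces, and the two resulting nodal wall classes must lie in the same extended wall. Recording for each $A_2$-class the unordered pair of nodal walls it glues yields a partition of the nodal walls into extended walls. Overlaying this partition on the map (nodal wall) $\mapsto$ (edge) from the previous step and verifying that the induced map (extended wall) $\mapsto$ (edge) is injective completes the argument.

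The main obstacle I expect is precisely this injectivity check: several nodal walls may represent the same edge without a priori being linked by any cuspidal stratum, a phenomenon already flagged in the remark preceding the theorem. So the heart of the proof is to produce, whenever two nodal walls share an edge, an $A_2$-cuspidal class whose two adjacent nodal walls are exactly that pair. The most delicate case is that of the two exceptional chambers of Theorem \ref{AffineSurfDef}, where chambers looking otherwise identical are distinguished by the Rokhlin-Guillou-Marin function; there one has to check that the cuspidal degenerations carry that refined invariant across in a consistent way, so that no spurious additional extended wall appears.
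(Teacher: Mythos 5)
Your proposal is a strategy outline rather than a proof, and the step you yourself flag as ``the heart of the proof'' is exactly the one left without any argument. Concretely: you reduce the theorem to (i) a deformation classification of one-nodal transversal-at-infinity real affine cubics, (ii) a parallel classification of the $A_2$-cuspidal strata together with the unordered pair of nodal walls each one glues, and (iii) a verification that the resulting map (extended wall) $\to$ (edge) is injective. None of (i)--(iii) is carried out, and each is substantial. Step (i) is, via the normal form $f_2+f_3=0$ of Lemma \ref{transversality}, equivalent to the deformation classification of transversal pairs (nonsingular cubic, nonsingular conic) in $\P^2_\R$ --- a Polotovsky-type classification that the paper only invokes later, and only to count ordinary walls inside extended walls (Table \ref{table}); you would also have to prove that your proposed list of invariants (topology of the pair, node type, lines through the node, RGM value) is complete, which is not easier than the original problem. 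Step (iii) is the crux: when several nodal walls carry the same edge (and Table \ref{table} shows this happens, e.g.\ three walls for the labels $6$ and $6,0$), you must exhibit cuspidal strata chaining them together, and your text only names this as an expected obstacle. So there is a genuine gap, not a detail to be routinely filled.

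For comparison, the paper circumvents the wall-by-wall bookkeeping entirely. Writing a one-nodal (or one-cuspidal) affine cubic as $f_2+f_3=0$ with the node/cusp at the origin, Lemma \ref{transversality} and Proposition \ref{edges-to-pairs} identify the extended walls over a given edge label with the connected components of the space of real conics transversal to a fixed real cubic $C$ with the prescribed distribution of real intersection points on the components of $C_\R$ (degenerate conics of rank $2$ account for the cusps, which is why connectedness is proved at the level of extended, not ordinary, walls). Connectedness of these components is then proved directly: for nonzero labels via the map $C^{(5)}\to C^{(6)}$ (a conic through five points of $C$), whose real restrictions have only fold singularities by Abel--Jacobi (Lemma \ref{R-branching}, Proposition \ref{interior}), so connectedness of $\dot C^{\i}_\R$, $\dot C^{\i,\j}_\R$ transfers to $\dot V^{a}_\R$, $\dot V^{a,b}_\R$; for the labels $0$ and $0,0$ via convexity of the set of quadratic forms of fixed sign on each component of $C_\R$ plus the fact that non-transversality without real intersection is of codimension $2$. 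If you want to salvage your approach you would need to supply the two classifications and the explicit cusp-chaining; the paper's reduction shows these can be avoided altogether.
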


\subsection*{Acknowledgements} This text is extracted from our survey on the topology of real cubic hypersurfaces
that we set up in 2012, but then switched to real enumerative geometry.
It is after a recent communication with V.A.~Vassiliev
that we realized a possible value of publishing separately the results on the classification of real affine cubic surfaces, and we are profoundly grateful to him for this inspiration.

The second author acknowledges support from the grant ANR-18-CE40-0009 of French Agence Nationale de Recherche.


\section{Preliminaries}
\subsection{On the projective real nonsingular cubic surfaces}({\it cf., \cite{Klein},\cite{Segre}})
\begin{theorem}\label{SurfDef} There are five deformation classes of real non-singular projective cubic surfaces.
Two real non-singular projective cubic
surfaces
are deformation equivalent if and only if their real parts $X_\R$ are homeomorphic.
For four of them, $X_\R=\#_{2k+1}\Rp2$, $k=0,1,2,3$.
For the fifth one,
$X_\R=\Rp2 \+S^2$.

\vskip0.05in\hskip-5mm\begin{minipage}{10cm}
\hskip0.2in  The wall-crossing graph of real non-singular projective cubic surfaces
coincides with the left-hand side graph of Figure \ref{Af2-Graph}
and is reproduced to the right.
The number of real lines for each class is encircled and the number of lines
passing through the node corresponding to a wall-crossing decorates the edges.
\end{minipage}\hskip2mm
\begin{minipage}{16cm}
\includegraphics[width=0.15\textwidth]{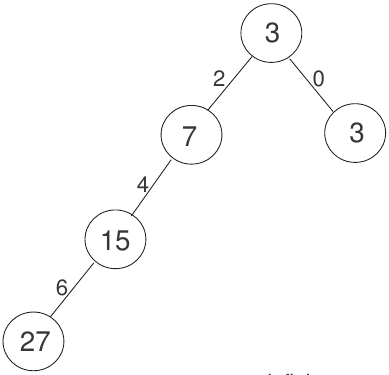}
\end{minipage}
\end{theorem}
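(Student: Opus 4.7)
The plan is to follow the classical Schl\"afli--Klein--Segre approach, enhanced by a direct wall-crossing analysis at the end.

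Step 1 (topological candidates). Since $X_\C$ is a Del Pezzo surface of degree $3$ with $b_*(X_\C;\Z/2)=9$ and $\sigma(X_\C)=-5$, the Smith--Thom inequality, the parity constraint $b_*(X_\R;\Z/2)\equiv b_*(X_\C;\Z/2)\pmod 2$, the Rokhlin--Comessatti congruence $\chi(X_\R)\equiv\sigma(X_\C)\pmod 4$, and the observation that a real hyperplane section represents a $\Z/2$-class of odd self-intersection in $X_\R$ (forcing $X_\R$ to contain a non-orientable component) together leave exactly five candidates: $\#_{2k+1}\Rp2$ for $k=0,1,2,3$, and $\Rp2\+S^2$. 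Each is realised by an explicit cubic: the four connected types by blowing up $\P^2_\R$ at $2k$ real points and $3-k$ pairs of complex-conjugate points in sufficiently general position, and the split type by an explicit Cayley-type equation whose real locus is inspected directly.

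Step 2 (connectedness of each deformation class). For each of the four connected types, one shows that $X$ admits a conjugation-invariant configuration of six pairwise disjoint lines that blows down over $\R$, presenting $X$ as a real birational model $\pi\colon X\to\P^2_\R$ whose exceptional set consists of $r$ real points together with $(6-r)/2$ conjugate pairs, where $r\in\{0,2,4,6\}$ is determined by the topology of $X_\R$. Deformation equivalence then reduces to the connectedness of the real moduli space of such $6$-tuples in general position (no three collinear, no six on a conic), which is a standard complement-of-incidence-stratification argument in a product of real configuration spaces. For the split type $\Rp2\+S^2$ the real blow-down to $\P^2_\R$ is obstructed, and one argues instead via a real blow-down to a smooth quadric with empty real locus (or via a direct $W(E_6)$-monodromy / period-map argument) to establish connectedness of the one remaining stratum.

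Step 3 (the wall-crossing graph). A generic wall consists of real cubics with exactly one real node, of type $A_1^+$ (solitary real point, locally $x^2+y^2+z^2=0$) or $A_1^-$ (real tangent cone, locally $x^2+y^2-z^2=0$). Crossing an $A_1^+$-wall births or kills a small $S^2$ component of $X_\R$, which within the list of Step~1 is compatible only with the edge $\Rp2\leftrightarrow\Rp2\+S^2$; crossing an $A_1^-$-wall is a real Morse surgery of index $1$ or $2$, attaching or detaching a handle, and thereby produces the ladder of edges $\Rp2\#k\T^2\leftrightarrow\Rp2\#(k+1)\T^2$ for $k=0,1,2$. The edge decorations follow from the classical fact that exactly six lines of $X$ pass through any node, of which the real ones are counted according to the signature of the node. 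The main obstacle I anticipate is Step~2 for the split class $\Rp2\+S^2$, where the absence of a real blow-down to $\P^2_\R$ forces a separate geometric model and requires a genuinely different connectedness argument from the other four types.
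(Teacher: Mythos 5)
The paper gives no proof of this statement: Theorem~\ref{SurfDef} is quoted as a classical result of Klein and Segre, and the rest of the paper uses it as input (Lemma~\ref{through-nodal}, for instance, explicitly invokes ``the deformation classification of real non-singular projective cubic surfaces''). So there is no in-paper argument to compare yours against; your sketch follows the standard Klein--Segre route, which is the right one, but several steps are wrong or incomplete as written.

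Concretely. (1) The congruence $\chi(X_\R)\equiv\sigma(X_\C)\pmod 4$ is false and would exclude two of the five realized types: $X_\R=\Rp2$ has $\chi=1$ and $X_\R=\Rp2\+S^2$ has $\chi=3$, while $\sigma(X_\C)=-5$. Only the parity $\chi(X_\R)\equiv\sigma(X_\C)\pmod 2$ holds in general; the mod $8$ and mod $16$ congruences apply only to $(M-1)$- and $M$-surfaces. (2) Even with the correct congruences, your Step~1 does not ``leave exactly five candidates'': for example $\#_3\Rp2\+S^2$ has $b_*=7\le 9$, odd Euler characteristic, a non-orientable component, and satisfies the $(M-1)$-congruence $\chi\equiv\sigma\pm2\pmod 8$. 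Excluding it needs a B\'ezout/projection argument in the spirit of Lemma~\ref{Petrovsky-hyperbolic}: a line through a point enclosed by the spherical component meets that component twice, hence meets the residual component exactly once, so projection from that point identifies the residual component with $\Rp2$. (3) Your fallback model for the split type cannot exist: a real blow-down of a cubic surface to a quadric contracts a conjugation-invariant set of five disjoint $(-1)$-curves, which must contain a real one, so the target quadric has a real point --- it cannot have empty real locus. The monodromy/period-map alternative you mention in passing is viable, but it is a pointer rather than an argument; the realizability of all five types is more cheaply obtained from small perturbations of the Cayley four-nodal cubic, as the paper notes. (4) In Step~2, a generic path of $6$-tuples crosses the collinear and coconic strata, where the associated cubic surface acquires a node; one must argue (as in Lemma~\ref{through-nodal}) that the family only touches the discriminant there and can be pushed off it, and for the projective theorem itself this has to be done without circularly invoking the classification being proved.
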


Note also that representatives of all 5
deformation classes of real projective nonsingular cubic surfaces can be obtained by small perturbations
of Cayley's 4-nodal cubic surface, $\sum_{i=0}^4 x_i^3 = \frac14 (\sum_{i=0}^4 x_i)^3$ (see \url{https://mathworld.wolfram.com/CayleyCubic.html}).

\subsection{Blowup models of cubic surfaces}({\it cf., \cite{Segre}})
A set of real points or real lines on a real algebraic surface is said to be {\it real} if it is invariant under
the complex conjugation on the surface.

A set of $6$  points $S\subset\P^2$  will be called a {\it typical 6-tuple}
if $S$ does not contain collinear triples and all 6 points are not coconic.

\begin{theorem}\label{cubic-to-plane*}
For any real projective non-singular cubic surface $X$ with
connected real part
there exists a real set of 6 pairwise disjoint lines on $X$.
Blowing down
these lines yields
a real plane with a typical real 6-tuple $S\subset \P^2$.
Conversely, blowing up
$\P^2$ at any real typical 6-tuple $S$
gives a
unique,  up to a real projective transformations,
real projective non-singular cubic surface $X$ with connected real part
equipped
with a real set of 6 skew lines.

This blow-down
transforms isomorphically any real non-singular hyperplane sections $A\subset X$
into a real non-singular plane cubic curves $C\subset\P^2$, $S\subset C$. In this way it establishes a bijection between the set of the former sections
$A$ with the set of the latter
curves $C$.
\qed\end{theorem}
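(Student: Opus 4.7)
The plan is to base the argument on two classical complex facts. First, every smooth complex cubic surface $X\subset\P^3$ is the anticanonical image of the blow-up of $\P^2$ at some typical 6-tuple $S=\{p_1,\dots,p_6\}$, with the 27 lines of $X$ decomposing as the 6 exceptional divisors $E_i$, the 15 strict transforms $L_{ij}$ of the chords $\la p_i,p_j\ra$, and the 6 strict transforms $C_i$ of the conics through five of the $p_j$; in particular a \emph{sixer} (a set of six pairwise disjoint lines) corresponds to a blow-down to $\P^2$. Second, for any typical $S$ the blow-up $Y$ has $(-K_Y)^2=3$, $h^0(-K_Y)=4$, and $-K_Y$ very ample, so its anticanonical map embeds $Y$ as a smooth cubic in $\P^3$, uniquely up to $\operatorname{PGL}_4(\C)$.

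For the direct implication the task reduces to exhibiting a conjugation-invariant sixer on every real $X$ with connected $X_\R$. By Theorem \ref{SurfDef} there are exactly four such deformation classes, indexed by $X_\R=\#_{2k+1}\Rp2$, $k=0,1,2,3$. I plan to realize one representative of each class as the blow-up of $\P^2$ at a real typical 6-tuple with $6-2k$ real points and $k$ complex-conjugate pairs: blowing up a real point replaces an open disk in the real locus by a M\"obius band and drops $\chi$ by $1$, while conjugate pairs leave the real locus untouched, giving $\chi(X_\R)=1-(6-2k)=2k-5$, which matches $\#_{7-2k}\Rp2$. The six exceptional divisors then form a sixer preserved by complex conjugation. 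Since the 27 lines form an \'etale cover of the space of smooth cubic surfaces, they can be tracked continuously along any real path in a deformation class, so conjugation-invariance of a chosen six-element subset is locally constant; every real $X$ with connected $X_\R$ therefore inherits a real sixer from one of the four explicit models.

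The converse direction is built into the same construction: a real typical 6-tuple $S$ determines a real blow-up $Y$, whose real anticanonical embedding sends $Y$ to a real smooth cubic $X\subset\P^3$, unique up to the $\operatorname{PGL}_4(\R)$-action; and $X_\R$ is connected because $\P^2_\R$ is and the real blow-up surgery preserves connectedness. For the hyperplane-section correspondence, $|-K_X|$ pulls back to $|3L-\sum E_i|$, the strict transforms of plane cubics through $S$; any smooth $A\in|-K_X|$ satisfies $A\cdot E_i=1$ and hence meets each $E_i$ transversally in a single point, so the blow-down restricts to an isomorphism $A\to C$ onto a smooth plane cubic $C\supset S$, with inverse given by strict transform. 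The genuine obstacle in the plan is the existence of a real sixer; everything else is either a standard complex statement or a direct transfer of it to the real setting.
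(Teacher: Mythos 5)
Your proposal is correct in substance, but note that the paper itself offers no proof of this statement: it is quoted as classical (the subsection header points to Segre, and the theorem is stated with an immediate \qed), so there is nothing to be ``essentially the same'' as. What you supply is a legitimate modern argument: the complex del Pezzo picture (any six pairwise skew $(-1)$-lines on a cubic surface are simultaneously contractible, and conversely the anticanonical system of a blow-up at a typical 6-tuple re-embeds it as a smooth cubic, uniquely up to projectivities), combined with a real descent. Your descent has two ingredients that deserve to be made explicit but are both sound: (i) the blow-up models at $6-2k$ real points plus $k$ conjugate pairs do land in the four deformation classes with connected $X_\R$ --- this uses Theorem \ref{SurfDef} (itself quoted without proof in the paper, so no circularity) together with your Euler-characteristic count; (ii) the transport of a conjugation-invariant sixer along a real path inside a deformation class, using that the incidence variety of lines (or of sixers) is a finite covering of the space of smooth cubics and that the permutation induced by $\conj$ on the continuously varying lines is locally constant. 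One small point you should add: after contracting a real sixer you get a real form of $\P^2$, which a priori could be the real form without real points; since $X_\R\neq\emptyset$ and the contraction is a real morphism, the image has real points, so it is the standard $\P^2_\R$ and the 6-tuple is a genuine real typical 6-tuple. With that remark, and the standard fact that smooth anticanonical members meet each exceptional line once (giving the asserted bijection between hyperplane sections $A$ and plane cubics $C\supset S$), your argument is complete; its advantage over the classical reference is that it is self-contained modulo Theorem \ref{SurfDef}, at the cost of invoking the deformation classification where Segre's treatment derives the real sixer directly from the analysis of the real lines.
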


Recall that for a typical 6-tuple $S=\{x_1,\dots,x_6\}\subset\P^2$ the 27 lines in the corresponding cubic surface $X$
include:
\begin{itemize}\item
6 exceptional curves $E_i$ corresponding to blowing up at $x_i$;
\item
$\binom62=15$ proper images
of straight lines $x_ix_j\subset \P^2$, $1\le i<j\le6$;
\item
6 proper images $Q_i$ of plane conics passing through the 5 points $x_j$, $j\ne i$.
\end{itemize}

Theorem \ref{cubic-to-plane*} allows us to represent any real affine nonsingular and transversal at infinity cubic surface $(X,A)$
with connected $X_\R$ via a real non-singular plane cubic curve $C$
equipped with a real typical 6-tuple $S\subset C$.
Since such a representation determines $(X,A)$ only up to a real affine transformation,
which may preserve or reverse orientation,
such approach ignores
a possibility that some affine equivalent cubics may be not deformation equivalent.
By this reason, at our first step of proving Theorem \ref{AffineSurfDef} we classify pairs $(X,A)$ only up to
a weaker equivalence in which
we call two pairs {\it coarse deformation equivalent} if one is deformation equivalent to
the image of another under an affine transformation.

As is well known and easy to show, Theorem \ref{cubic-to-plane*} extends to a wider class of 6-tuples, as well as to families of them.
In particular, the following statement holds.

\begin{theorem}\label{generalized-family-statement} Given a continuous family $(C_t,S_t)$ formed by non-singular real plane cubics $C_t$ and
real 6-tuples $S_t\subset C_t$
such that the linear system of cubic curves passing through $S_t$ has projective dimension 2,
there is a unique, up to a family of real projective transformations, family of real
projective cubic surfaces $X_t$ giving rise
to the given data including the proper image $A_t$ of $C_t$ as a real non-singular hyperplane section of $X_t$.
\qed\end{theorem}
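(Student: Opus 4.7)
The plan is to construct the family $X_t$ as the relative anticanonical image of the family of blowups $\tilde X_t = \mathrm{Bl}_{S_t}\P^2$. Let $T$ denote the parameter space. Since the six points of $S_t$ vary continuously and remain distinct (they lie on the smooth curves $C_t$ and form 6-tuples), the blowups $\tilde X_t$ assemble into a continuous real family $\tilde\pi\colon \tilde X\to T$ of smooth rational surfaces, equipped with six disjoint continuous families of exceptional $(-1)$-curves $E_1,\dots,E_6$ and a compatible real structure.

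Next I would analyze the relative anticanonical sheaf $\mathcal O(-K_{\tilde X/T})$, whose fiberwise sections correspond to plane cubic curves through $S_t$ pulled back to $\tilde X_t$. The hypothesis that the projective dimension of this linear system is the same at every $t$, combined with Grauert-type upper semicontinuity of $h^0$, promotes the direct image $\tilde\pi_{*}\mathcal O(-K_{\tilde X/T})$ to a locally trivial real vector bundle of constant rank on $T$. Choosing locally on $T$ a real frame of its sections, I would build the fiberwise morphism $\tilde X \to T\times \P^3_\R$, which by Theorem \ref{cubic-to-plane*} is a closed embedding of each $\tilde X_t$ onto a non-singular cubic surface $X_t\subset\P^3_\R$. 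Two local choices of real frame differ by a continuous family of real projective transformations, which yields simultaneously the uniqueness statement and the patching of the local models into a global real family $X\to T$.

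To recover $A_t$ as a hyperplane section, note that $C_t$ itself is a plane cubic passing simply through the six smooth points of $S_t$, so its proper transform $\hat C_t\subset \tilde X_t$ belongs to $|-K_{\tilde X_t}|$; under the anticanonical map it is sent onto $X_t\cap H_t$ for a unique real hyperplane $H_t\subset\P^3_\R$. Smoothness of $C_t$ at the points of $S_t$ makes the blowup an isomorphism on $\hat C_t$, so $A_t:=\hat C_t$ is non-singular and is canonically identified with $C_t$. The main technical point I expect to require care is the promotion of $\tilde\pi_*\mathcal O(-K_{\tilde X/T})$ to a locally trivial real vector bundle: this relies on the pointwise hypothesis on $h^0$, upper semicontinuity of cohomology, and descent of the real structure from $-K_{\tilde X/T}$ to its pushforward. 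Once this is in place, the remaining fiberwise identifications are immediate consequences of Theorem \ref{cubic-to-plane*}.
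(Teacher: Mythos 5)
The paper states this theorem without proof (it is declared ``well known and easy to show''), so your argument must stand on its own. Its overall strategy --- assembling the blowups $\tilde X_t=\mathrm{Bl}_{S_t}\P^2$ into a real family, pushing forward the relative anticanonical sheaf, using constancy of $h^0$ plus semicontinuity to get a locally trivial real bundle, and deriving both the map to $\P^3$ and the uniqueness from changes of real frame --- is the natural one. The genuine gap is the step where you invoke Theorem \ref{cubic-to-plane*} to conclude that each fibrewise anticanonical map ``is a closed embedding of each $\tilde X_t$ onto a non-singular cubic surface.'' Theorem \ref{cubic-to-plane*} applies only to \emph{typical} 6-tuples (no collinear triple, not coconic), whereas the whole point of Theorem \ref{generalized-family-statement} --- and the way it is used in the proof of Lemma \ref{through-nodal} --- is to allow one-nodal 6-tuples $S_t$ (a collinear triple, or six coconic points). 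For such $S_t$ the proper transform of the line (resp.\ the conic) through those points is a $(-2)$-curve of anticanonical degree zero: the anticanonical morphism is not an embedding, it contracts this curve, and the image $X_t$ is a nodal cubic surface. So the non-singularity of $X_t$ you assert is false in exactly the cases the theorem is designed to cover (note that the theorem's conclusion claims non-singularity only for $A_t$, not for $X_t$).

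The repair stays within your framework. For six distinct points on a non-singular cubic $C_t$, at most three can be collinear and at most six coconic, so $\tilde X_t$ is always a weak del Pezzo surface of degree $3$: $-K_{\tilde X_t}$ is nef, big and base point free, $h^0(-K_{\tilde X_t})=4$, and the anticanonical morphism maps $\tilde X_t$ onto a cubic surface with at worst rational double points, an isomorphism off the $(-2)$-curves. (This also shows that the direct image must have rank four, i.e.\ the cubics through $S_t$ form a web of projective dimension $3$; you should reconcile this with the hypothesis ``projective dimension 2'' in the statement, which as written does not match the rank-four bundle your map to $\P^3$ requires.) Finally, since $C_t$ is non-singular, hence irreducible, its proper transform meets each contracted $(-2)$-curve with intersection number $3-3=0$ (resp.\ $6-6=0$) and is therefore disjoint from it; consequently $A_t$ is a non-singular hyperplane section of $X_t$ avoiding the singular points, as required. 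With these adjustments --- keeping your frame/uniqueness argument, which is fine --- the proof goes through; as written it only covers the typical case already handled by Theorem \ref{cubic-to-plane*}.
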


Note that the dimension condition imposed on $S$ in Theorem \ref{generalized-family-statement} is fulfilled if, for example,
$S$ is in a {\it one-nodal} position,
 that is if $S$ lies on a non-singular conic, or if $S$ contains one and only one triple of points aligned.

\subsection{Counting real lines intersecting a given component of $A_\R$}
Here we treat a particular case of blowup models, namely, the case where $C_\R$ has two connected components: an oval and a one-sided component.
We denote by $\SS_{a,b}$, $a,b\ge0$, $a+b\in\{0,2,4,6\}$,
the set of pairs $(C,S)$, where $C\subset\P^2$ is a
non-singular cubic curve, $C_\R$ has two connected components,
$S\subset C$ is a
real typical $6$-tuple
which includes $\mu\le3$
pairs of complex-conjugate imaginary points and $a+b=6-2\mu$ real ones, among which
$a$ points lie on the one-sided component
of $C_\R$ and the remaining $b$ real points lie therefore
on the oval (two-sided component).
By $\S_{a,b}$ we denote the set of real affine cubic surfaces $X\-A$,
represented by pairs $(C,S)\in\SS_{a,b}$, and put
$\S_\mu=\cup_{a+b=6-2\mu}\S_{a,b}$.

\begin{proposition}\label{cubic-to-plane.equiv}
The set $\S_{\mu}$ is partitioned into non-empty subsets as follows:
\begin{itemize}\item
for $\mu=0$ into 3 subsets: $\S_{6,0}$,
$ \S_{0,6}=\S_{3,3}=\S_{4,2}$ and $\S_{1,5}=\S_{2,4}=\S_{5,1}$,
\item for $\mu=1$ into 2 subsets: $\S_{4,0}$ and
$\S_{0,4}=\S_{1,3}=\S_{2,2}=\S_{3,1}$,
\item for $\mu=2$ into 2 subsets $\S_{2,0}$ and
$\S_{0,2}=\S_{1,1}$.
\end{itemize}
The case $\S_3=\S_{0,0}$ is trivial.
\end{proposition}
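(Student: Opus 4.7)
By Theorem \ref{cubic-to-plane*}, two typical 6-tuples $(C, S)$ and $(C', S')$ represent the same affine cubic $(X, A)$ iff they arise from two real blowdowns $X \to \P^2$, i.e., from two choices of $6$ skew conjugation-invariant lines on $X$. The plan is therefore to parametrize changes of blowdown by Cremona-type moves on $(C, S)$, to work out their action on $(a, b)$, and to check that the resulting orbits match the asserted partition. The real $W(E_6)$ acts transitively on such configurations and is generated by permutations of the $6$ lines (which are trivial on $(a, b)$) together with real Cremona moves based at $3$ real-structure-invariant points of $S$: either $3$ real points, or $1$ real point together with one conjugate pair.

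Each Cremona replaces the $3$ chosen points of $S$ with the $3$ ``third intersections'' of $C$ with the pairwise chords. Combining the elementary topology of chords (using that $[J]$ is non-trivial and $[O]$ is trivial in $H_1(\P^2_\R; \Z/2)$: the third intersection of a chord through two real endpoints lies on $J$ iff the endpoints lie on the same component, and the third intersection of a chord through a conjugate pair is forced to lie on $J$ by parity) with a short $H_1(Y_\R; \Z/2)$ computation for the intermediate blowup $Y$ (which decides whether the labels $J, O$ on $C'_\R$ are swapped with respect to those on $C_\R$), one obtains explicit transformations: a real Cremona with $p$ centers on $J$ acts as $(a, b) \mapsto (b-3, a+3)$ if $p = 0$, as $(a, b) \mapsto (b+1, a-1)$ if $p = 2$, and trivially if $p \in \{1, 3\}$; a complex Cremona with its real center on $O$ acts as $(a, b) \mapsto (b-1, a+1)$, and trivially when the real center lies on $J$.

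A direct orbit enumeration under the admissible moves, bearing in mind that a real Cremona requires three real points in $S$ (so that for $\mu = 2$ only complex moves are available, and for $\mu = 3$ no moves at all), then yields exactly the claimed partition: three orbits for $\mu = 0$, two for $\mu = 1$, two for $\mu = 2$, and the trivial case for $\mu = 3$. Non-emptiness of each class is settled by exhibiting explicit representatives, obtainable by small real perturbations of Cayley's 4-nodal cubic (see the remark after Theorem \ref{SurfDef}). I expect the main difficulty to be the $H_1(Y_\R; \Z/2)$ computation controlling the swap between the labels $J$ and $O$ through a Cremona: tracking which component is ``one-sided'' in the new plane requires carefully expressing the class of $J$ in the changed basis of exceptional curves, but once this is in place the remainder is a mechanical orbit enumeration.
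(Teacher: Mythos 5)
Your core mechanism is the same as the paper's: the identifications are produced by real quadratic Cremona transformations based either at three real points of $S$ or at one real point plus a conjugate pair, and your transformation rules for $(a,b)$ (swap with $(a,b)\mapsto(b-3,a+3)$ for three centers on the oval, $(a,b)\mapsto(b+1,a-1)$ for one center on the oval and two on the one-sided component, $(a,b)\mapsto(b-1,a+1)$ for a conjugate pair plus a real center on the oval, trivial action in the remaining configurations) are all correct; they can indeed be verified by the orientability bookkeeping you sketch, which is exactly the mechanism behind Lemma \ref{oval-comparison}. Non-emptiness is even easier than perturbing the Cayley cubic: one just places the required number of real points on the two components of some two-component cubic.

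The place where you go beyond what you can support is the completeness step. To conclude that the orbits are no larger than listed (e.g.\ that $\S_{6,0}$, $\S_{0,6}=\S_{3,3}=\S_{4,2}$ and $\S_{1,5}=\S_{2,4}=\S_{5,1}$ are genuinely distinct), you invoke the assertion that the ``real $W(E_6)$'' acts transitively on real blowdowns and is generated by permutations together with the listed real Cremona moves. That is a substantive claim: what is needed is that the centralizer of the conjugation-induced involution in $W(E_6)$ is generated by these particular real elements, and this must be checked for each real structure (or supported by a reference, e.g.\ Segre); nothing in your sketch does this, and without it the orbits could a priori merge further. Note that the paper does not need any such statement: its proof of Proposition \ref{cubic-to-plane.equiv} only establishes the stated equalities by the same Cremona moves, and the distinctness of the resulting groups comes for free later from a deformation invariant, namely the number of real lines meeting the oval computed in Proposition \ref{line-count} ($0$, $12$, $16$ for $\mu=0$, etc.), as used in Proposition \ref{8-classes}. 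So either prove (or properly cite) the real generation statement, or drop the orbit-exhaustion argument and distinguish the three families by the line count (or by the homology class of the oval in $X_\R$), which is the cheaper route the paper takes.
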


\begin{proof}
First, note that the indicated subsets are non-empty because
one can
always
displace the required number of real points
on the 2-components of a cubic $C_\R$.
Next, these sets either coincide or disjoint and, thus, give a partition.

If $b\ge3$, we can apply a standard quadratic Cremona transformation which
changes exceptional divisors over points $x_1,x_2,x_3$ on the oval $O\subset C_\R$
by the proper transform of lines $x_1x_2$, $x_2x_3$ and $x_3x_1$.
This Cremona transformation takes
the oval with $b$ points into a one-sided
component with $b-3$ points and, thus, identifies $\S_{a,b}$ with $\S_{b-3,a+3}$. Hence,
$\S_{3,3}=\S_{0,6}$, $\S_{2,4}=\S_{1,5}$ for $\mu=0$ and
$\S_{1,3}=\S_{0,4}$ for $\mu=1$.

If $a\ge2$ and $b\ge1$,
we apply the same Cremona transformation but with
$x_1$ chosen on the oval, and $x_2$ with $x_3$ on the one-sided component.
This
also takes the oval into one-sided component and, thus, identifies $\S_{a,b}$ with $\S_{b+1,a-1}$.
Hence,
$\S_{5,1}=\S_{2,4}$, $\S_{4,2}=\S_{3,3}$ for $\mu=0$, and
$\S_{3,1}=\S_{2,2}$ for $\mu=1$.

If $b\ge1$ and $\mu\ge1$, we apply again the same transformation but with $x_2,x_3$ chosen imaginary complex conjugate and $x_1$ on the oval.
This also takes the oval into one-sided component and, thus, identifies $\S_{a,b}$ with $\S_{b-1,a+1}$.
Therefore,
$\S_{3,1}=\S_{0,4}$, $\S_{2,2}=\S_{1,3}$ for $\mu=1$, and
$\S_{1,1}=\S_{0,2}$ for $\mu=2$.
\end{proof}

To distinguish affine cubics $(X,A)$ by counting real lines intersecting  a given component of
$A_\R$, we make first a count in terms of $(\P^2,C,S)$ and then translate it in terms of $(X,A)$.

\begin{lemma}\label{C-enumeration}
If $(C,S)\in\SS_{a,b}$, then
the proper image $\til O\subset A_\R$ of the oval $O\subset C_\R$
is intersected by $2b+ab$ real lines of $X$
if $b$ is even, and by $6-2\mu+ab$ if $b$ is odd.
\end{lemma}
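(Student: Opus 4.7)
The strategy is to enumerate the 27 lines of $X$ by the three types listed just after Theorem \ref{cubic-to-plane*}---the exceptional divisors $E_i$, the proper transforms $\til{\,x_ix_j\,}$ of chords, and the proper transforms $Q_i$ of the conic through $5$ of the $6$ points---and to count, for each type, those that are real and cross $\til O$. Throughout I would work with $\Z/2$ intersection numbers in $\P^2_\R$, using $[O]=0$, $[C']=1$ for the one-sided component $C'$, $[\ell]=1$ for any line and $[q]=0$ for any conic; for a typical $6$-tuple the relevant real intersections of $C$ with lines and conics are transverse, so these $\Z/2$ classes translate directly into parities of counts of real intersection points.

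The two easy types are quickly disposed of. A real $E_i$ meets $A$ at a single point, which lies on $\til O$ iff $x_i\in O$, giving $b$ such lines. The chord $\til{\,x_ix_j\,}$ is real iff $\{x_i,x_j\}$ is conjugation invariant and meets $A$ at the unique third intersection $p_{ij}$ of $x_ix_j$ with $C$. Writing $\ell=x_ix_j$, the parities $|\ell\cap C'|\equiv 1$ and $|\ell\cap O|\equiv 0\pmod 2$, combined with $\ell\cdot C=3$, force a case-by-case verdict: $p_{ij}\in\til O$ exactly when $x_i$ and $x_j$ are both real and lie on different components of $C_\R$. This contributes $ab$ real lines meeting $\til O$.

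The \emph{main obstacle} is the conic case. The conic $q_i$ through the other five points is real iff $x_i$ is real; granted this, the sixth intersection $q_i^*$ of $q_i$ with $C$ is automatically real, being the one-point complement of a real-invariant five-point subset inside the real six-point intersection. Applying the parity argument once more, $[q_i]\cdot[C']=0$ forces $|q_i\cap C'|$ to be even. The passed-through contribution to this count is $a-1$ when $x_i\in C'$ and $a$ when $x_i\in O$, so $q_i^*\in O$ iff that contribution is even. Thus the real $Q_i$ that meet $\til O$ number $a$ when $a$ is odd (those with $x_i\in C'$) and $b$ when $a$ is even (those with $x_i\in O$).

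Finally I would add the three contributions. Since $a+b=6-2\mu$ is even, $a$ and $b$ share parity. When $b$ is even the total is $b+ab+b=2b+ab$; when $b$ is odd it is $b+ab+a=(a+b)+ab=6-2\mu+ab$. These are exactly the two formulas of the lemma.
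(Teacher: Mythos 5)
Your proof is correct and follows essentially the same route as the paper: both enumerate the three types of lines ($E_i$, proper transforms of chords $x_ix_j$, and the conics $Q_i$) and count which real ones meet $\til O$, arriving at $b+ab+b$ or $b+ab+a$ according to the parity of $b$. The only difference is that you make explicit, via mod $2$ intersection numbers in $\P^2_\R$, the component-location arguments for the third chord point and the sixth conic point, which the paper's proof leaves implicit.
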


\begin{proof}
For each of the $b$ points $x_i\in O$, the component $\til O$ is intersected by
the line $E_i$ and by the proper images of lines
$x_ix_j$ for each of the $a$ points $x_j\in S$ on the one-sided component $A_\R\-O$.
In addition, $\til O$ intersects $b$ lines $Q_i$ if
$b$ is even, or $a=6-2\mu-b$ lines $Q_j$ if $b$ is odd.
\end{proof}

\begin{lemma}\label{oval-comparison}
Assume that $(C,S)\in\SS_{a,b}$. Then,
the proper image $\til O\subset A_\R$ of the oval $O\subset C_\R$ is an oval of $A_\R$
if and only if $b$ is even.
\end{lemma}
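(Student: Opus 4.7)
The plan is to compute the class $[\til O]$ in $H_1(X_\R;\Z/2)$ and then use that a connected simple closed curve on a surface is two-sided if and only if its $\Z/2$-self-intersection vanishes.

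First I would observe that $X_\R$ is determined only by the $a+b$ real points of $S$: each pair of complex-conjugate centers contributes an exceptional divisor swapped by conjugation, which carries no real points. Topologically $X_\R$ is therefore $\P^2_\R = \Rp2$ blown up at the $a+b$ real points, so $X_\R \cong \#_{a+b+1}\Rp2$ and $H_1(X_\R;\Z/2) \cong (\Z/2)^{a+b+1}$ admits as basis the exceptional classes $e_1,\dots,e_{a+b}$ over the real points $p_i\in S$ together with the proper transform $\ell$ of a generic real line of $\P^2_\R$. The $\Z/2$-intersection form in this basis is the identity matrix: each $\Rp1$ in a summand $\Rp2$ has self-intersection $1$ mod $2$, and distinct basis curves can be chosen disjoint.

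Next I would pin down $[\til O]$. Because $O$ is an oval in $\Rp2$, it bounds a disk, so $\pi_*[\til O]=[O]=0$ in $H_1(\Rp2;\Z/2)$, which forces $[\til O]$ into the span of the $e_i$'s. Pairing with $e_j$ recovers the $e_j$-coefficient: since $C$ is smooth at each $p_j$, the proper transform $\til O$ meets $e_j$ transversely in a single real point exactly when $p_j\in O$, and is disjoint from $e_j$ otherwise. Hence $[\til O] = \sum_{p_i\in O} e_i$, a sum of exactly $b$ basis vectors.

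Feeding this into the identity form gives $[\til O]^2 \equiv b \pmod 2$, and since $\til O \to O$ is a homeomorphism $\til O$ is a genuinely embedded circle in $X_\R$, hence an oval of $A_\R$ precisely when $[\til O]^2=0$, i.e.\ when $b$ is even. The one delicate point is excluding an $\ell$-summand in $[\til O]$; this is exactly what the oval property of $O$ in $\Rp2$ provides via $\pi_*[\til O]=0$, and once it is in hand the remaining computation is immediate from the diagonal structure of the intersection form.
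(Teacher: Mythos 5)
Your argument is correct, but it takes a different route from the paper's. The paper settles the lemma with a one-line local observation: blowing up a surface at a point lying on a curve alternates the orientability of the tubular neighbourhood of the proper transform, so starting from the two-sided oval $O\subset\P^2_\R$ the sidedness of $\til O$ flips exactly $b$ times, once for each real center on $O$ (centers off $O$, and conjugate imaginary pairs, change nothing). You instead argue globally and homologically: you take the basis $e_1,\dots,e_{a+b},\ell$ of $H_1(X_\R;\Z/2)$ with identity Gram matrix, use $\pi_*[\til O]=[O]=0$ (the oval bounds a disk in the plane being blown up) to exclude the $\ell$-term, identify $[\til O]=\sum_{p_i\in O}e_i$ by pairing with the exceptional classes, and invoke the standard equivalence ``two-sided $\Leftrightarrow$ vanishing $\Z/2$-self-intersection'' to get the parity of $b$ as the criterion. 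Each step is sound, and the two proofs encode the same parity phenomenon; the paper's version is shorter and needs no homology bookkeeping, while yours produces the explicit class $[\til O]$ in $H_1(X_\R;\Z/2)$, which is useful background for the places where the paper asks whether the oval is null-homologous in $X_\R$ or homologous to a vanishing cycle. One point you share with the paper's proof and could make explicit: the Introduction defines the oval of $A_\R$ as the two-sided component in the plane at infinity, whereas both proofs test two-sidedness inside $X_\R$; these agree because $\P^3_\R$ is orientable and $X$ meets the plane transversally along $A$, so the normal line bundles of a component of $A_\R$ in $X_\R$ and in the plane at infinity have the same first Stiefel--Whitney class.
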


\begin{proof}
This is because
after blowing up at a point of a curve on a surface, the tubular neighbourhood of
 the proper
 image of the curve alternates its orientability.
\end{proof}

\begin{proposition}\label{line-count} Assume $(X,A)\in\S_{a,b}$. Then
the oval of $A$ intersects
$0$ lines if $b=0$. If $b>0$, then this oval intersects:
\begin{itemize}
\item
12 lines for $\S_{0,6}=\S_{3,3}=\S_{4,2}$ and 16 lines for
$\S_{1,5}=\S_{2,4}=\S_{5,1}$;
\item
8 lines if  $a+b=4$;
\item
4 lines if $a+b=2$.
\end{itemize}
\end{proposition}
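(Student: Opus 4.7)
The plan is to combine Proposition \ref{cubic-to-plane.equiv} with the two preceding lemmas, exploiting the freedom to pick the most convenient representative within each equivalence class of 6-tuples. The key observation is that by Lemma \ref{oval-comparison} the proper image $\til O$ of the oval $O\subset C_\R$ is an oval of $A_\R$ precisely when $b$ is even, while Lemma \ref{C-enumeration} counts lines meeting $\til O$, not lines meeting the oval of $A_\R$. Thus when $b$ is odd, the formula $6-2\mu+ab$ refers to the one-sided component of $A_\R$ and cannot be used as such to read off the intersection number we want.

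The way around this is to notice, by direct inspection of the list in Proposition \ref{cubic-to-plane.equiv}, that every non-empty class contains a representative $\SS_{a',b'}$ with $b'$ even: the pure one-sided classes $\S_{6,0},\S_{4,0},\S_{2,0}$ have $b=0$; the class $\{\S_{0,6},\S_{3,3},\S_{4,2}\}$ contains $\S_{4,2}$ (and $\S_{0,6}$); the class $\{\S_{1,5},\S_{2,4},\S_{5,1}\}$ contains $\S_{2,4}$; the class $\{\S_{0,4},\S_{1,3},\S_{2,2},\S_{3,1}\}$ contains $\S_{0,4}$ and $\S_{2,2}$; and the class $\{\S_{0,2},\S_{1,1}\}$ contains $\S_{0,2}$. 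Since the oval of $A_\R$ is an intrinsic subset of $X_\R\setminus A_\R$ and does not depend on the blow-down used to obtain $(\P^2,C,S)$, the number of real lines intersecting it can be computed from any such representation.

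Having reduced to a representative with $b'$ even, I would then apply the $b'$-even formula $2b'+a'b'$ of Lemma \ref{C-enumeration}: it gives $0$ when $b'=0$, and $12,\ 16,\ 8,\ 8,\ 4$ for $(a',b')=(4,2),(2,4),(0,4),(2,2),(0,2)$ respectively, matching exactly the counts stated in the proposition. The only mildly non-obvious point in the argument -- the part one might call the ``main obstacle'', although it is really a bookkeeping issue -- is the shift between the two sides of the blow-down correspondence, \emph{i.e.}, keeping straight that the oval of $A_\R$ need not be $\til O$ and that the Cremona identifications of Proposition \ref{cubic-to-plane.equiv} must be invoked before, not after, the enumeration formula is used.
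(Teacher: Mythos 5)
Your proof is correct, but it takes a slightly different route than the paper. The paper handles the odd-$b$ representatives head-on: for $b$ even it reads off $2b+ab$ from Lemma \ref{C-enumeration} together with Lemma \ref{oval-comparison}, and for $b$ odd it notes that $\til O$ is then the one-sided component of $A_\R$, meeting $6-2\mu+ab$ real lines, so the oval meets the \emph{remaining} real lines (using that every real line of $X$ meets $A_\R$ in exactly one real point, and the total count $27$, $15$, $7$ of real lines for $\mu=0,1,2$). You instead sidestep odd $b$ altogether: since the number of real lines through the oval of $A_\R$ is intrinsic to $(X,A)$ and, by Proposition \ref{cubic-to-plane.equiv}, every class $\S_{a,b}$ with $b$ odd coincides with some $\S_{a',b'}$ with $b'$ even, you compute with an even representative and apply only the even-$b$ branch of Lemma \ref{C-enumeration}. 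Both arguments are sound and short; your reduction buys independence from the classical totals of real lines per deformation class (which the paper's complementary count implicitly uses), at the price of leaning on the Cremona identifications of Proposition \ref{cubic-to-plane.equiv} inside the proof --- a harmless dependence, since those identifications already appear in the statement being proved, and the agreement of the several even representatives within one class ($(0,6)$ and $(4,2)$ giving $12$; $(0,4)$ and $(2,2)$ giving $8$) serves as a built-in consistency check.
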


\begin{proof}
Lemma \ref{C-enumeration} gives answer $2b+ab$
for even $b$, which is the required number by Lemma \ref{oval-comparison}.
For odd $b$, these Lemmas imply that
the one-sided component $\til O$ intersects $6-2\mu+ab$ lines and, thus,
the oval of $A_\R$ intersects the remaining real lines.
\end{proof}

\section{Proof of the Main Theorem}

\subsection{Coarse deformation classes via blow-up models}

\begin{lemma}\label{through-nodal} If
the triples $(\P^2_\R, C^1_\R, S^1_\R)$, $(\P^2_\R,C^2_\R, S^2_\R)$ are homeomorphic, then
the associated with them real affine cubic surfaces
$(X^1,A^1)$,
$(X^2,A^2)$ are
coarse deformation equivalent.
\end{lemma}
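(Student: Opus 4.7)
The plan is to apply Theorem \ref{generalized-family-statement}: construct a continuous family $(C_t, S_t)_{t\in[0,1]}$ of real non-singular plane cubics equipped with real $6$-tuples, starting at $(C^1, S^1)$ and ending at $(C^2, S^2)$, such that at every $t$ the linear system of cubics through $S_t$ has projective dimension $2$. The associated family $(X_t, A_t)$ of real projective cubic surfaces with their hyperplane sections, produced by Theorem \ref{generalized-family-statement}, will then connect $(X^1, A^1)$ to a real projective image of $(X^2, A^2)$ through non-singular affine cubics transversal at infinity, yielding the desired coarse deformation equivalence.

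First I would connect the underlying cubic curves by a path. The given homeomorphism of triples carries $C^1_\R$ onto $C^2_\R$, so the two real cubics have the same Klein type, and the classical deformation classification of real non-singular plane cubics supplies a continuous path $C_t$ of such cubics with the prescribed endpoints.

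Next I would transport the $6$-tuple along this path. The homeomorphism specifies exactly how many points of the tuple sit on each component of $C_{t,\R}$ and how many complex-conjugate pairs sit in $C_{t,\C} \setminus C_{t,\R}$. A standard isotopy argument in the configuration space of real $6$-tuples on a real smooth curve then yields a continuous family $S_t \subset C_t$ matching $S^1$ and $S^2$ at the endpoints.

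The hard part will be ensuring that $S_t$ remains in one-nodal position (in the sense of the remark following Theorem \ref{generalized-family-statement}) for every $t$, so that Theorem \ref{generalized-family-statement} applies uniformly along the path. The locus of configurations on $C_t$ that are neither typical nor one-nodal is semi-algebraic of real codimension at least $2$ in the space of $6$-tuples: the conditions ``lies on a conic and contains a collinear triple'', ``contains two distinct collinear triples'', and the analogous higher degenerations, each impose at least two independent conditions. A generic small perturbation of the path $t \mapsto S_t$, keeping the endpoints fixed, therefore avoids this bad locus. The remaining verifications are routine: enumerating the relevant degenerations, and checking that the projective ambiguity in Theorem \ref{generalized-family-statement} — automorphisms of $\P^3$ preserving the distinguished hyperplane section $A_t$ — is realized by affine transformations in the ambient affine chart, which is exactly what coarse deformation equivalence permits.
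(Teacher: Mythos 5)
Your overall skeleton (path of cubics, transport of the $6$-tuple, generic perturbation so that only mild degenerations occur, then Theorem \ref{generalized-family-statement}) is the same as the paper's, but there is a genuine gap at the central point. You claim the associated family $(X_t,A_t)$ runs ``through non-singular affine cubics transversal at infinity''. It does not: at the finitely many instants where $S_t$ is one-nodal (six points on a conic, or containing a collinear triple), the proper transform of that conic or line is a real $(-2)$-curve on the blow-up, and the cubic surface $X_t$ furnished by Theorem \ref{generalized-family-statement} is a $1$-nodal surface whose real node lies in the affine part (off $A_t$). So at these instants the path leaves the space of non-singular affine cubics, i.e.\ it meets $\D_\R$, and deformation equivalence is not yet established. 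Your codimension argument cannot remove these instants: it only shows that the degenerations \emph{worse} than one-nodal have codimension $\ge 2$; the one-nodal locus itself is of codimension $1$ in $C^{(6)}$ (on a cubic, ``three points collinear'' and ``six points conconic'' are each one condition via the group law), so a generic path between two typical configurations will in general cross it, and connectedness of the space of \emph{typical} configurations with prescribed distribution on the components of $C_\R$ is exactly what one does not know.

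What is missing is the argument the paper supplies to get past these nodal instants: along the family the number of real points of $S_t$ on each component of $C_{t,\R}$ is constant, hence the homeomorphism type of $X_{t,\R}$ is the same on both sides of a nodal instant; moreover $A_t$ stays non-singular and misses the node. One concludes that at each such instant the family only \emph{touches} the discriminant without crossing it, and then, invoking the deformation classification of real non-singular projective cubic surfaces (Theorem \ref{SurfDef}), one bypasses each nodal instant by a small real deformation of the family of surfaces, keeping $A_t$ non-singular and transversal. Without this step (or some substitute for it), your proposed proof does not yield the coarse deformation equivalence. Your final remark about the projective-versus-affine ambiguity is fine and is precisely why the conclusion is only \emph{coarse} deformation equivalence.
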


\begin{proof} According to Theorem \ref{generalized-family-statement} to prove coarse deformation equivalence of  $(X^1,A^1)$ and $(X^1,A^2)$, it is sufficient to build
a real deformation between $(C^1_\R, S^1_\R)$ and $(C^2_\R, S^2_\R)$ using at worth one-nodal 6-tuples.
Such a real deformation equivalence can be built in three steps:
deforming $C^1$ to $C^2$, transporting $S^1$ from $C^1$ to $C^2$ along a chosen deformation by means of a family of typical 6-tuples, and finally moving the transported $S^1$ to $S^2$. A real deformation between $C^1$ and $C^2$
exists due to the deformation classification of real plane cubic curves. Given two typical $\conj$-equivariant 6-tuples $S^1, S^2$ on the same real
non-singular plane cubic curve $C$ we may join them by a generic pass $S^t$, $t\in [1,2]$, if
the pairs $(C_\R, S^1_\R)$, $(C_\R, S^2_\R)$
the pairs are homeomorphic. It may happen that at a finite number
of times the 6-tuples $S^t$ go through a one-nodal position. But since in the associated family of real surfaces $(X,A)$ the topology of the real part of the cubic surfaces is not
changing, and since when $X$ acquires a node the curve $A$ does not pass through the node
(and remains non-singular), we may conclude that at each nodal instance
we touch the discriminant $\Delta^a$ without crossing it, and thus, due to the deformation classification of real non-singular projective cubic surfaces, may bypass the family of surfaces by a small real deformation.
\end{proof}

\begin{proposition}\label{A-2comp}
The real transversal at infinity affine cubics $(X,A)$ for which both $X_\R$ and $A_\R$ are connected form
$4$ coarse deformation classes: each class is determined by
the deformation class of $X$.
\end{proposition}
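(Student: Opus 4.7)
The plan is to leverage the blow-up model of Theorem~\ref{cubic-to-plane*} together with Lemma~\ref{through-nodal}: it suffices to show that for each of the four deformation classes of $X$ guaranteed by Theorem~\ref{SurfDef} that have $X_\R$ connected (namely $X_\R\cong\Rp2\#k\,\T^2$, $k=0,1,2,3$), any two pairs $(X,A)$ with $A_\R$ connected yield pairwise homeomorphic triples $(\P^2_\R, C_\R, S_\R)$ under the blow-up correspondence. Homeomorphism at the level of these triples then delivers coarse deformation equivalence by Lemma~\ref{through-nodal}.

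First I would translate the hypotheses into the blow-up picture. By Theorem~\ref{cubic-to-plane*}, a pair $(X,A)$ with $X_\R$ connected corresponds to a typical real $6$-tuple $S\subset\P^2$ contained in a real nonsingular plane cubic $C\subset\P^2$, and $A\to C$ is a real isomorphism. Hence $A_\R$ is connected if and only if $C_\R$ has no oval, \ie, $C_\R$ is the single one-sided component, homeomorphic to $\Rp1$ sitting in $\P^2_\R$ with disk complement.

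Next I would classify the allowable triples. Write $r=|S_\R|$, so $r\in\{0,2,4,6\}$. A standard real blow-up calculation (blowing up a real point replaces a disk by a M\"obius band, \ie\ performs $\#\Rp2$ on the real locus, while a conjugate pair of imaginary points leaves $X_\R$ unchanged) gives $X_\R\cong\#_{r+1}\Rp2$, so $r$ is a complete invariant for the deformation class of $X$ in our range. For fixed $r$, the pair $(\P^2_\R, C_\R)$ is standard, and any two placements of $r$ unordered points on $C_\R\cong S^1$ are equivalent under a self-homeomorphism of $C_\R$, which extends across the complementary disk to a self-homeomorphism of the pair $(\P^2_\R, C_\R)$. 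Consequently all triples $(\P^2_\R, C_\R, S_\R)$ with the given $r$ are pairwise homeomorphic.

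Feeding this into Lemma~\ref{through-nodal} yields coarse deformation equivalence within each of the four families indexed by $r$, and since the topology of $X_\R$ distinguishes these four families, we obtain exactly four coarse deformation classes, one for each deformation class of $X$. The only non-formal inputs are the blow-up computation of $X_\R$ and the extension of the circle homeomorphism across the disk complement of $C_\R$; both are elementary but I would state them explicitly, since they are what forces a single homeomorphism type (and hence a single coarse class) for each value of $r$.
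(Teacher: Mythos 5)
Your proposal is correct and follows essentially the same route as the paper: interpret $(X,A)$ via the blow-up model of Theorem \ref{cubic-to-plane*}, note that connected $A_\R$ forces $C_\R$ to be a single pseudoline so the class of $X$ is determined by the number $r=6-2\mu$ of real points of $S$, and conclude uniqueness of the coarse class from Lemma \ref{through-nodal}. The only difference is that you spell out the elementary steps (the blow-up computation of $X_\R$ and the homeomorphism of triples obtained by moving $r$ points along $C_\R\cong S^1$ and extending over $\P^2_\R$) that the paper leaves implicit.
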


\begin{proof}
Using Theorem \ref{cubic-to-plane*} we interpret the claim in the language of blowup models of $(X,A)$,
 that is triples $(\P^2,C,S)$. Since, as it follows from Theorem   \ref{SurfDef}, the class of $X$ is determined by $\mu\in\{0,1,2,3\}$,
 for construction of $A$ for a fixed class of $X$ it is enough to pick a connected cubic $A_\R$
and place on it $6-2\mu$ real points (and $\mu$ pairs of imaginary points on $A\-A_\R$).
 For uniqueness of the coarse deformation class of such pairs $(X,A)$ it is enough to
apply Lemma \ref{through-nodal}.
\end{proof}

\begin{proposition}\label{8-classes}
The real transversal at infinity affine cubics $(X,A)$ for which $X_\R$ is connected, while $A_\R$ is not, form
8 coarse deformation classes.  For all but 2 exceptional cases, indicated in Theorem \ref{AffineSurfDef},
the coarse deformation class is determined by
the topology of $X_\R\-A_\R$, while the exceptional cases are distinguished as stated in Theorem \ref{AffineSurfDef}.
\end{proposition}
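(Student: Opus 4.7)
The plan is to reduce via Theorem \ref{cubic-to-plane*} to the classification of blow-up triples $(\P^2, C, S)$, count these via Proposition \ref{cubic-to-plane.equiv}, and separate the resulting classes by combining the topology of $X_\R \sm A_\R$ with the line counts of Proposition \ref{line-count}. Since $X_\R$ is connected and $A_\R$ is not, the model cubic $C_\R$ must consist of an oval and a one-sided component, and the homeomorphism type of $(\P^2_\R, C_\R, S_\R)$ is recorded by the triple $(\mu, a, b)$ with $a + b = 6 - 2\mu$. By Lemma \ref{through-nodal}, pairs sharing this type are coarse deformation equivalent, so each $\S_{a, b}$ is contained in a single coarse deformation class.

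Second, I would apply Proposition \ref{cubic-to-plane.equiv} to identify the $\S_{a, b}$ that coincide as sets of surfaces under Cremona transformations. This yields exactly eight non-empty aggregates: three for $\mu = 0$, two each for $\mu = 1$ and $\mu = 2$, and one for $\mu = 3$. To show that these eight are pairwise inequivalent, I first observe that $\mu$ is determined by the homeomorphism type of $X_\R$, so distinct values of $\mu$ already separate their classes. Within each $\mu < 3$, I would distinguish the class $\S_{a, 0}$ (no real blow-up centers on the oval of $C_\R$) from the remaining one(s) by a topological invariant of $X_\R \sm A_\R$: in $\S_{a, 0}$ the disc bounded by the oval in $\Rp2$ is disjoint from every blow-up center, so it survives in $X_\R$ and becomes a separate component of the complement of $A_\R$, whereas for $b > 0$ the blow-ups on the oval prevent it from bounding a disc in $X_\R$ and the complement loses one component. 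This handles the cases $\mu = 1$ and $\mu = 2$.

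The main obstacle is the M-surface case $\mu = 0$. After removing $\S_{6, 0}$ as above, two classes remain, $\S_{0, 6} = \S_{3, 3} = \S_{4, 2}$ and $\S_{1, 5} = \S_{2, 4} = \S_{5, 1}$, and both produce homeomorphic pairs $(X_\R, A_\R)$ and homeomorphic $X_\R \sm A_\R$; no topological invariant of the real locus can separate them. Here I would invoke Proposition \ref{line-count}: the oval of $A_\R$ meets $12$ real lines of $X$ in the first class and $16$ in the second. Since the set of real lines on $X$ varies continuously with $X$ on strata of $\P^{19}_\R \sm (\D_\R \cup \D'_\R)$ and their incidences with a given component of $A_\R$ are locally constant, this count is a coarse deformation invariant; its two values distinguish the final pair of classes and identify them with the two exceptional cases of Theorem \ref{AffineSurfDef}.
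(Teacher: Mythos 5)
Your proposal follows essentially the same route as the paper: reduction to blow-up models via Theorem \ref{cubic-to-plane*} and Lemma \ref{through-nodal}, the count of eight aggregates via Proposition \ref{cubic-to-plane.equiv}, and the line counts of Proposition \ref{line-count} to separate the two $\mu=0$ classes with $b>0$. In fact you make explicit two points the paper leaves implicit: that the number of components of $X_\R\-A_\R$ (two when $b=0$, one when $b>0$) separates $\S_{a,0}$ from the rest within each $\mu$, and that the count of real lines meeting the oval of $A_\R$ is an intrinsic quantity, constant along real deformations, so that it both proves disjointness of the aggregates and survives as a coarse deformation invariant; both arguments are correct (for full conformity with the statement ``determined by the topology of $X_\R\-A_\R$'' one should also note that $\mu$ is read off the complement via $\chi(X_\R\-A_\R)=\chi(X_\R)=2\mu-5$, since $A_\R$ consists of circles). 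The one piece of the statement you do not address is the second characterization of the exceptional classes in Theorem \ref{AffineSurfDef}, namely the values $q(O)=2$ and $q(O)=0$ of the Rokhlin--Guillou--Marin function; the paper's proof derives these by exhibiting explicit plane sections of a perturbed Cayley cubic (Figure \ref{sect1}) in which the oval is, respectively, a vanishing cycle and the sum of two disjoint vanishing cycles. Your line-count argument does distinguish the two exceptional classes, but to prove the proposition as stated (``distinguished as stated in Theorem \ref{AffineSurfDef}'') this homological verification, or an equivalent computation of $q(O)$, still needs to be supplied.
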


\begin{proof}
We use Theorem \ref{cubic-to-plane*} and  Lemma \ref{through-nodal} like in the case of connected $A_\R$,
and then apply Proposition \ref{cubic-to-plane.equiv} giving 8 coarse deformation classes of $(X,A)$:
three for $\mu=0$, two for each of $\mu\in\{1,2\}$ and
one for $\mu=3$. Proposition \ref{line-count} gives the numbers of lines intersecting each component of $A_\R$ and in particular, it describes the two exceptional classes for $\mu=0$
in terms of real lines, as it is stated in Theorem  \ref{AffineSurfDef}.
To obtain the other description, in terms of Rokhlin-Guillou-Marin
quadratic function, it is sufficient to observe on examples (see Figure \ref{sect1})
that in the case
$\S_{0,6}=\S_{3,3}=\S_{4,2}$,
 (where the number of real lines intersecting the oval is 12),
the oval is a vanishing cycle, and in the other case,
$\S_{5,1}=\S_{2,4}=\S_{1,5}$,
(where the number of real lines intersecting the oval is 16),
the oval is homologous to the sum of two disjoint vanishing cycles.
Therefore, in the first case the quadratic function takes value $2\in\Z/4$ on the oval, while in the other
case its value is $0\in\Z/4$.
\end{proof}

\vskip-4mm
\begin{figure}[h!]
\caption{}\label{sect1}
\begin{minipage}{6.8cm}\vskip-5mm
\hbox{\includegraphics[width=0.3\textwidth]{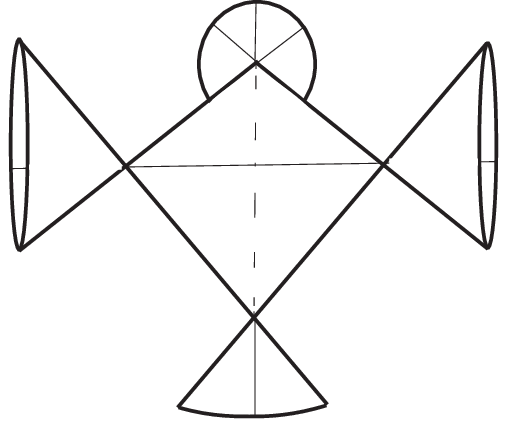}}
\hbox{\includegraphics[width=0.3\textwidth]{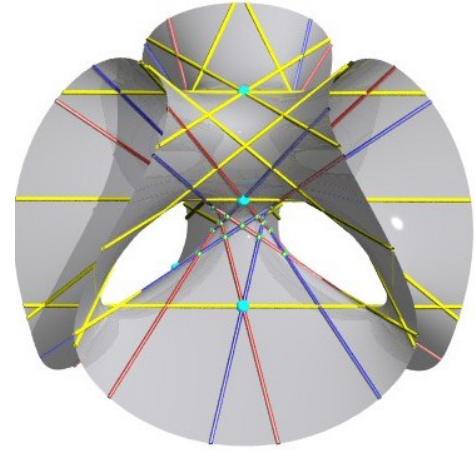}}
\end{minipage}
\begin{minipage}{10cm}
{\small
Cayley's 4-nodal cubic surface with a tetrahedron-like piece $T$ (top figure)
gives after a small perturbation a cubic surface $X$ (at the bottom), with nodes replaced by 1-handles.
A plane truncating  $T$ near its vertex traces on $X_\R$ a cubic curve $A_\R$ whose oval
is a vanishing cycle.
Another plane separating one pair of vertices of $T$ from another traces
on $X_\R$ a cubic curve $A_\R$
whose oval is homologous to the sum of two disjoint vanishing cycles.
In each case the oval intersects $4n$ lines in $X_\R$,
where $n$ is the number of edges of $T$  intersected by the plane (resp. 3 and 4).}
\end{minipage}
\end{figure}

\subsection{Affine cubics with disconnected $X_\R$}
\begin{lemma}\label{Petrovsky-hyperbolic}
If a real cubic hypersurface $X_\R\subset \P^{n+1}_\R$ has at least one contractible $n$-dimensional component in $\P^{n+1}_\R$, then $X_\R$ has no singular points.
\end{lemma}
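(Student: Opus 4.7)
I would argue by contradiction. Suppose $X$ admits a real singular point $p$ while $X_\R$ contains a contractible $n$-dimensional component $B$. Contractibility of $B$ in $\P^{n+1}_\R$ forces, via a Jordan--Brouwer--Schoenflies type argument, that $B$ is a topological $n$-sphere bounding a topological $(n{+}1)$-ball $\bar D\subset\P^{n+1}_\R$.

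The key algebraic observation is that, because $p$ has multiplicity $\ge 2$ on the cubic $X$, the restriction of the defining polynomial of $X$ to any real line $L$ through $p$ (not contained in $X$) has $p$ as a double real root; hence it factors over $\R$ as $(t-p)^2(t-q')$ with $q'\in\R$, so $L\cap X_\R=\{p,q'\}$ has cardinality at most $2$ for every such $L$.

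In the case $p\notin B$, consider a real line $L$ from $p$ through any interior point of $D$. Since $\bar D$ is a ball and $L$ is a projective line (a circle), $L\cap\bar D$ is a closed chord whose two endpoints lie in $\partial D=B$; so $|L\cap B|\ge 2$. But $L\cap B\subseteq L\cap X_\R\sm\{p\}=\{q'\}$ gives $|L\cap B|\le 1$, a contradiction.

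The main obstacle is the case $p\in B$. Here $p$ is also a singular point of $B$, and its tangent cone in $X$ is necessarily an indefinite real quadric (a definite one would realise $p$ as an isolated real point of $X_\R$, incompatible with $p$ lying in the $n$-dimensional component $B$). Choosing $L$ through $p$ in a direction strictly inside this tangent cone, $L$ enters $D$ locally on both sides of $p$. Because $L\cong\P^1_\R$ generates $\pi_1(\P^{n+1}_\R)=\Z/2$ while $\bar D$ is contractible, $L\not\subset\bar D$, so $L$ must exit $\bar D$ through some $q\in B\sm\{p\}$; combined with $L\cap X_\R=\{p,q'\}$ this forces $q=q'\in B$ for every inside-cone direction. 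The projection $\pi_p$ from $p$ is a continuous bijection $X_\R\sm\{p\}\to\P^n_\R$ by the factorisation above, and it restricts to a continuous bijection of $B\sm\{p\}$ onto one open component of $\P^n_\R\sm Q_\R$, where $Q$ is the projectivisation of the tangent cone; invariance of domain upgrades this to a homeomorphism. However, the cone structure of $B$ at $p$ dictates that the link of $p$ in $B$ be homeomorphic to $S^{r-1}\times S^{s-1}$ with $r,s\ge 1$ and $r+s=n+1$, forcing $B\sm\{p\}$ to carry a non-trivial topology (multiple ends, or non-vanishing higher homotopy) incompatible with the open region in question, which is the final contradiction.
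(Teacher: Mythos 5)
Your case $p\notin B$ is essentially the right Bézout argument (a line through the singular point meets $X$ with multiplicity $\ge 2$ there, leaving room for at most one further real point), but the proof has genuine gaps, and the decisive one is the case $p\in B$. There your argument is a sketch resting on several claims that are either unjustified or false for the situation at hand: (i) you assume the tangent cone at $p$ is a nondegenerate indefinite quadric, whereas the lemma must cover arbitrary real singular points of a cubic (degenerate quadratic part, or even a triple point, in which case $X$ is a cone); (ii) the assertion that the projection $\pi_p$ is a continuous bijection $X_\R\setminus\{p\}\to\P^n_\R$ is wrong in general --- lines through $p$ contained in $X$ (which do occur through singular points of cubics, e.g.\ the six lines through a node of a cubic surface) destroy injectivity, and lines meeting $X_\R$ only at $p$ (residual intersection at $p$ itself or at a pair of imaginary points) destroy surjectivity; (iii) the final step, that the link $S^{r-1}\times S^{s-1}$ forces a topology of $B\setminus\{p\}$ ``incompatible'' with an open component of $\P^n_\R\setminus Q_\R$, is not an argument --- no invariant is computed and no contradiction is actually derived. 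In addition, your opening reduction is an overclaim: a component that is contractible \emph{in} $\P^{n+1}_\R$ (i.e.\ null-homotopic there) need not be a topological sphere, and even if it were, no Schoenflies theorem gives a bounding ball in this generality. What is true, and all that is needed, is that such a component is null-homologous mod $2$, hence separates $\P^{n+1}_\R$, with at least one complementary region contractible in $\P^{n+1}_\R$; your ``chord'' count in the case $p\notin B$ should be rephrased through this separation/parity argument rather than through a ball.

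The missing idea that makes both cases uniform and elementary is the one the paper uses: besides the contractible (two-sided, null-homologous) component $B$, the real locus of a cubic hypersurface necessarily has another, one-sided component, because a generic real line meets $X_\R$ in an odd number of points while its mod $2$ intersection with $B$ is zero. Now take any point $z$ inside a complementary region that is contractible in $\P^{n+1}_\R$ and run Bézout on the line $L$ through $z$ and the singular point $p$: $L$ meets $B$ in at least two points, meets the other component in at least one point, and carries multiplicity $\ge 2$ at $p$, so the total intersection multiplicity is $\ge 4>3$ whether $p$ lies on $B$ or not. Note that without the extra one-sided component your $p\in B$ case cannot be closed by counting alone ($2+1=3$ does not exceed the degree), which is presumably what pushed you toward the tangent-cone/projection detour; with it, no analysis of the singularity type is needed at all.
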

\begin{proof} Let $F\subset X_\R$ be a contractible $n$-dimensional component. Then it decomposes $\P^{n+1}_\R$ into
$\ge2$ connected components so that
at least
one of them is  contractible in $\P^{n+1}_\R$. Pick a point inside this contractible component.
Each real line passing through this point intersects
$F$ at an even number $\ge2$ of real points, and in addition the other component of $X_\R$
at $\ge1$ points.
So, existence of a singular point contradicts to the B\'ezout theorem.
\end{proof}

\begin{proposition}\label{3-classes}
The real transversal at infinity affine cubic surfaces $(X,A)$ with disconnected $X_\R$ form
3 deformation classes.  One is formed by cubics with connected $A_\R$ and two with disconnected.
For cubics of one of the latter two classes, the oval of $A_\R$ is contained in the spherical component of $X_\R$,
while for the other class it is contained
 in the non-spherical component.
\end{proposition}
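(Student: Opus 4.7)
The plan is to classify pairs $(X,A)$ with $X_\R=\Rp2\+ S^2$ by the topological position of $A_\R$ inside $X_\R$, realize each case, and show each forms a single deformation class. First, any simple closed curve on the spherical component $S^2\subset X_\R$ is two-sided, hence an oval of $A_\R$. Since a smooth real plane cubic has at most one oval, at most one component of $A_\R$ lies on $S^2$; if one does, it is the unique oval of $A_\R$, and the pseudoline component of the plane cubic must then lie on the $\Rp2$ component of $X_\R$. This leaves exactly three topological cases: (a) $A_\R$ connected, on $\Rp2$; (b) $A_\R$ with two components both on $\Rp2$; (c) $A_\R$ with the pseudoline on $\Rp2$ and the oval on $S^2$. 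The three triples $(X_\R,A_\R)$ are topologically distinct, hence belong to different deformation classes. Each is realized by an appropriate plane section of a small perturbation of Cayley's $4$-nodal cubic yielding $X_\R=\Rp2\+ S^2$: a real plane disjoint from the $3$-ball bounded by $S^2$ cuts $\Rp2$ in one or two circles, giving (a) or (b); a real plane transversally crossing the ball gives (c).

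For uniqueness within each type I would proceed in two steps. By Theorem \ref{SurfDef}, any two cubics $X_1,X_2$ with disconnected real part can be joined by a path $X_t$ of non-singular real cubics with disconnected real part; by Lemma \ref{Petrovsky-hyperbolic}, the contractible $2$-sphere component persists along $X_t$ and prevents the emergence of a real singularity, so the $\Rp2\+ S^2$ decomposition varies continuously. The plane $\P^2_1$ can then be transported along $X_t$ while maintaining transversality (the locus of non-transversality is codimension $1$ and generically avoidable), reducing the question to fixed $X$: one must show that the space of real planes transversal to a fixed $X$ and realizing a given topological type is connected in the dual $\Rp3$.

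The main obstacle is this last connectedness assertion. The walls in the dual $\Rp3$ are the planes tangent to $X$ at a real point; at a generic wall, $A$ acquires a single real node, and crossing the wall is a local Morse modification of $A_\R$. The admissible transitions are an oval birth/death on $S^2$, giving an (a)$\leftrightarrow$(c) adjacency (this cannot be a (b)$\leftrightarrow$(c) adjacency, since adding an oval to a two-component $A_\R$ would yield three components, impossible for a plane cubic), and a Morse modification on the $\Rp2$ component, giving an (a)$\leftrightarrow$(b) adjacency. Since the blow-up model of Theorem \ref{cubic-to-plane*} is unavailable for disconnected $X_\R$, this wall enumeration is verified directly from the local picture at a tangency point; it shows that the real dual hypersurface of $X$ separates $\Rp3$ into exactly three chambers, one per topological type, completing the proof.
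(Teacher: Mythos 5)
Your reduction of the problem to a fixed surface $X$ and to the connectedness of the chambers cut out in the dual $\Rp3$ by the planes tangent to $X$ contains the actual mathematical content, and that content is missing. Knowing which local Morse modifications can occur when a generic tangent plane is crossed only constrains which \emph{adjacencies} between chambers are possible; it does not bound the \emph{number} of chambers, nor does it show that all planes realizing one topological type of $(X_\R,A_\R)$ lie in a single chamber. The real dual surface of a nonsingular cubic has degree $12$ (and its real part also contains real planes tangent at pairs of conjugate imaginary points), so a priori it can cut $\Rp3$ into many chambers, several of which realize the same type of section without being connected to one another; the paper's own classification shows that for other real cubic surfaces the topological type of the pair does \emph{not} determine the deformation class, so the assertion ``exactly three chambers, one per type'' is precisely what must be proved and cannot be dismissed as ``verified directly from the local picture.'' A secondary but real flaw is the transport step: the locus of planes non-transversal to $X_t$ is of codimension $1$, and codimension-$1$ loci are not ``generically avoidable'' by a path, so maintaining transversality while carrying $\P^2$ along $X_t$ also needs an argument (at best one may cross non-transversality occurring at imaginary points, using the refined space from the Introduction, but you do not invoke this).

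For contrast, the paper sidesteps the dual-space analysis entirely: given $(X,A)$ of a fixed type, it chooses an explicit standard model $(X',A')$ (a perturbation of sphere$\,\times\,$plane, or $g=wq+\epsilon f_\infty$ with a small sphere $q=0$ inside the spherical component, which keeps $A(t)=A$ fixed), arranges a sign condition $fg>0$ inside the small sphere, and shows that along the \emph{linear} segment $tf+(1-t)g$ the Bolzano theorem forces a contractible two-dimensional component to persist; Lemma \ref{Petrovsky-hyperbolic} (a B\'ezout argument) then guarantees that no real singularity of the surface or of its section can appear along the segment. This gives the connecting path directly, with no need to control the chamber structure of the space of planes. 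If you want to salvage your route, you would have to actually prove the three-chamber statement (or the connectedness of each type-stratum of planes), which is a substantial task in itself; otherwise the interpolation argument is the more economical way.
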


\begin{proof}
We start with the case in which
the spherical component $F\subset X_\R$ contains an oval $\G=F\cap \P^2_\R$
of $A_\R=X_\R\cap \P^2_\R$.
Then, we select as a standard model a
pair $(X', A')$ obtained by a small perturbation of the product of a sphere with a plane so that the spherical component $F'$ of $X'_\R$ and the
oval component of the curve $A'_\R$ are contained inside  $F$ and, respectively, $\Gamma$, while the non-spherical component of $X'_\R$ lies outside $F$.
Next, we pick cubic equations $f$ for $X_\R$ and $g$ for $X'_\R$
so that  $fg>0$ inside $F'$.
Then, by Bolzano intermediate value theorem
the cubic surface $X_\R(t)$ defined by $tf+(1-t)g$ has
for any $t\in [0,1]$ at least one 2-dimensional
component contained inside $F$.
Similarly, $A_\R(t)=X_\R(t)\cap \P^2_\R$ has
at least one 1-dimensional component inside $\Gamma$.
These components have to be contractible and so, by Lemma \ref{Petrovsky-hyperbolic},
the pairs $(X_\R(t), A_\R(t))$ are nonsingular for all $t\in [0,1]$. Thus,
$(X_\R, A_\R)$ and $(X'_\R, A'_\R)$ are deformation equivalent, and there remains to notice that all the standard models
$(X',A')$ are deformation equivalent to each other.

In the other 2 cases
we pick as a standard model a pair $(X',A')$ so that $X'$ is defined by
$g=wq+\epsilon f_\infty$, $0<|\epsilon |<\!\!<1$, where
$\P^2=\{w=0\}$, $f_\infty=f(x,y,z,0)$ ($f$ still defines $X$),
and $q=0$  defines a small sphere contained inside $F$ and chosen so
that $fg>0$ inside $F'$.
By the same arguments as above,
 the cubic surfaces $X(t)$  defined by $tf+(1-t)g=0$ are
 all non-singular.  In addition,
 $A(t)=A=A'$ for each $t$.
So, it remains to notice that all the standard models
$(X'_\R, A'_\R)$ with
homeomorphic $A'_\R$ are deformation equivalent, since all real nonsingular cubic curves with the same topology are deformation equivalent.
\end{proof}


\subsection{Achirality and end of proof of Theorem \ref{AffineSurfDef}}\label{achiral}
To finish the proof of Theorem \ref{AffineSurfDef} there remains to check achirality in each of coarse deformation classes.
\begin{proposition}\label{strong-achirality}
Each coarse deformation class of real affine surface $(X,A)$
contains a surface invariant under some real affine reflection.
\end{proposition}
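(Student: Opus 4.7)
The aim is to upgrade the coarse deformation classification already established (Propositions \ref{A-2comp}, \ref{8-classes}, \ref{3-classes}) to a genuine deformation classification. Since coarse equivalence is generated by deformation together with the full group of real affine transformations, it suffices to exhibit, in each coarse class, a pair $(X,A)$ carrying a real affine involution whose fixed locus is a real affine plane: reflecting such a representative produces a surface that is simultaneously affine-equivalent and deformation-equivalent to itself, so the coarse class cannot split into two deformation classes. The argument splits along the two branches of the classification, namely the case of disconnected $X_\R$ and the case of connected $X_\R$.

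For the three coarse classes with disconnected $X_\R$ treated in Proposition \ref{3-classes}, I would revisit the standard models built there. In the case when the oval of $A_\R$ lies on the spherical component, take the sphere, the separating plane $\P^2_\infty$ and the truncating plane all mutually orthogonal and centered/oriented so that reflection through the truncating plane preserves each of them; the small perturbation used in the proof can be chosen invariant under this reflection. In the other two cases the standard models are given by equations of the form $wq+\epsilon f_\infty=0$, where $q$ cuts out a small sphere; choosing the sphere and $f_\infty$ symmetric under an affine reflection fixing $\P^2_\infty$ and the center of the sphere produces the required symmetric representative.

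For the twelve coarse classes with connected $X_\R$ I would work with blowup models $(\P^2,C,S)$, using Lemma \ref{through-nodal}: it is enough to realize the required homeomorphism type of $(\P^2_\R,C_\R,S_\R)$ by a pair $(C,S)$ invariant under some projective reflection $\tau$ of $\P^2$ that preserves the line at infinity (so that the induced involution of $X$ is an affine reflection preserving $A$). Smooth plane cubics with a reflection symmetry are abundant — Weierstrass-style cubics $y^2z=p(x,z)$ are $\tau$-invariant under $y\mapsto -y$, and either one-component or two-component real cubics of this form exist. A $\tau$-invariant typical $6$-tuple $S$ is then obtained by placing symmetric pairs of real points and complex-conjugate pairs on the prescribed components of $C_\R$ in the counts $(a,b)$ permitted by Proposition \ref{cubic-to-plane.equiv}. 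Genericity of the placement ensures that $S$ is typical, since the conditions of having a collinear triple or being coconic are closed and proper.

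The main delicate point is the two exceptional classes for $\mu=0$, which must be distinguished by the value of the Rokhlin--Guillou--Marin function on the oval, not only by $(a,b)$. Here I would follow the construction depicted in Figure \ref{sect1}: start with Cayley's $4$-nodal cubic, whose $S_4$ symmetry contains several affine reflections; perturb it equivariantly with respect to one such reflection; and choose the affine chart so that $\P^2_\infty$ is a symmetry plane of the chosen reflection, either close to a vertex of the tetrahedral component $T$ (giving $q(O)=2$) or separating two pairs of vertices of $T$ (giving $q(O)=0$). The only verification required is that the resulting hyperplane section $A$ is nonsingular and transversal at infinity, which is an open condition and is immediate from the Bézout count in the proof of Proposition \ref{3-classes}. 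Once a symmetric representative is produced in every coarse class, the proposition follows, and combined with Propositions \ref{A-2comp}, \ref{8-classes}, \ref{3-classes} this completes the proof of Theorem \ref{AffineSurfDef}.
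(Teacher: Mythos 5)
Your overall logic is correct, and for most classes your route is genuinely different from the paper's. The paper gets all fifteen reflection-invariant representatives from a single source: it perturbs Cayley's $4$-nodal cubic equivariantly with respect to one of the reflections induced by coordinate transpositions and takes a reflection-invariant plane at infinity, using the fact that such perturbations realize all five projective types; the two exceptional classes are then exactly the two symmetric plane sections of Figure \ref{sect1}, which is the one point where your argument and the paper's coincide. You instead symmetrize the standard models of Proposition \ref{3-classes} for disconnected $X_\R$ and build symmetric blow-up models $(C,S)$ (Weierstrass cubics with symmetric $6$-tuples in the counts allowed by Proposition \ref{cubic-to-plane.equiv}) for the connected-$X_\R$ classes. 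That is a legitimate alternative; it avoids relying on the assertion that every coarse class is reachable by an equivariant perturbation of the Cayley cubic, at the price of having to transfer a planar symmetry into an affine symmetry of $(X,A)$.

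That transfer is the one step you assert rather than prove, and your stated reason is off target: in the blow-up model the plane being blown up has no distinguished line at infinity, so ``$\tau$ preserves the line at infinity'' is not the relevant condition. What matters is that $\tau$ preserves $C$ and $S$, so the induced automorphism $T$ of $X\subset\P^3$ preserves $A$ and hence the plane spanned by $A$, i.e.\ it restricts to an affine involution of the chart; but you must still rule out that this involution is orientation-preserving (e.g.\ a half-turn about a line) rather than a reflection. This can be fixed as follows: choose $S$ disjoint from the fixed line $L$ of $\tau$; then the proper transform of $L$ is a rational curve of degree $3$ in the anticanonical embedding which is pointwise fixed by $T$. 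Since the fixed locus of a projective involution of $\P^3$ is a union of two complementary linear subspaces, such a curve forces the fixed locus to be a plane plus a point, and that plane is not the plane at infinity because $T$ acts nontrivially on $A$; hence on the affine chart $T$ fixes an affine plane pointwise, its linear part has eigenvalues $1,1,-1$, and it is indeed an affine reflection. With this verification (and a brief justification that generic symmetric placements of $S$ are typical), your proof is complete and equivalent in strength to the paper's.
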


\begin{proof} The Cayley 4-nodal real cubic surface $\sum_{i=0}^4 x_i^3 = \frac14 (\sum_{i=0}^3 x_i)^3$
is invariant under the projective transformations induced by
permutations of the coordinates $x_0,\dots, x_3$.
In particular, it is invariant under 6 reflections induced by
the $6=\binom{4}2$ transpositions.
So, we may obtain examples of achiral real affine surfaces just by choosing one of these reflections and considering a reflection invariant real perturbation
of the Cayley surface and a reflection invariant real projective plane as the plane at infinity. It is then straightforward to construct in such a way a representative for each of our 15 coarse deformation
classes. (In particular, the both plane sections indicated in caption to Figure \ref{sect1} can be chosen invariant with respect to a reflection fixing 2 of 4 vertices of the tetrahedron and permuting the 2 others.)

Recall that for all but 2 exceptional cases a coarse deformation class is determined by simple topological properties of the pair $(X_\R, A_\R)$. As to the 2 exceptional cases already the construction
indicated in the proof of Proposition \ref{8-classes} provides reflection invariant examples.
\end{proof}

\subsection{Proof of Theorem \ref{TheoremB}}
The forgetful map $(X,A)\mapsto X$ induces a projection $\Gamma^{\rm af}\to \Gamma$ of the wall-crossing graph $\Gamma^{\rm af}$ of real transversal at infinity affine cubic surfaces to the wall-crossing graph $\Gamma$ of projective real nonsingular cubic surfaces that is shown in Theorem \ref{SurfDef}. For the connected component of $\Gamma^{\rm af}$ that describes adjacency for deformation classes of affine cubics $(X,A)$ with connected $A_\R$, this projection is an isomorphism, as it follows from its bijectivity at the level of vertices established in Proposition \ref{A-2comp}.

For the remaining part of $\Gamma^{\rm af}$ (that describes adjacency in the cases with 2-component $A_\R$), the set of vertices is described in Propositions \ref{8-classes}
and \ref{3-classes}. Labels of the vertices are determined by Proposition \ref{line-count}.
To determine the edges and their labels,
we use
the following observation: if under wall-crossing the Euler characteristic of $X_\R$ is increasing, then the number of real lines is decreasing by $0\le 2k\le 12$
(where $k$ is the number of real lines passing through the node at the instance of wall-crossing) and, in particular,  each of the labels $a$ and $b$ decreases.
Existence of the edges with labels $(a,b)$ such that $a,b\in \{0,2,4,6\}, a+b\le 6$ follows from Theorem \ref{generalized-family-statement} and
known examples of transversal pairs of conic and cubic,
see \cite[Section 9.2]{FK}.

\section{Edges of the wall-crossing graph}\label{preliminaries}
\subsection{Extended walls}
Let us assume that an affine cubic
 $X\sm A, A=X\cap \P^3$, has a double point $x\in X\sm A$ and consider affine coordinates centered at $x$. Then, such an affine cubic is defined by equation $f_2+f_3=0$ where
 $f_2$ and $f_3$ are nonzero
 ternary quadratic and cubic forms: $f_3$ defines the (cubic) curve $A\subset \P^2$ and $f_2$ a conic.
The following criterion
is straightforward from definitions ({\it c.f.}, for example, \cite[Lemma 2.2]{4fold}).

\begin{lemma}\label{transversality}
An affine cubic surface $\{f_2+f_3=0\}$ is transversal at infinity and has no singular point except
the node or cusp at the coordinate origin,
if and only if $\{f_3=0\}\subset \P^2$ is a nonsingular cubic transversal to the conic $\{f_2=0\}\subset\P^2$.
\qed\end{lemma}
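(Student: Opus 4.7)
The plan is to homogenize the affine equation, setting $F(x_0,x_1,x_2,x_3)=x_0 f_2+f_3$, so that $X=\{F=0\}\subset \P^3$ is the projective closure and $A=X\cap\{x_0=0\}=\{f_3=0\}$. At a point $(0:y)\in A$ the gradient of $F$ reads $(f_2(y),\nabla_{x_1,x_2,x_3}f_3(y))$, so $X$ is smooth and transversal to the plane at infinity at $(0:y)$ iff $\nabla f_3(y)\ne 0$. This already identifies ``$X$ transversal at infinity'' with ``$A$ nonsingular'' and shows that, under this assumption, no singular point of $X$ lies at infinity, so the remaining analysis may be carried out in the affine chart.

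In the affine chart a point $y$ is singular iff $\nabla f_2(y)+\nabla f_3(y)=0$, and contracting with $y$ and invoking Euler's relation gives $2f_2(y)+3f_3(y)=0$; combined with $f_2(y)+f_3(y)=0$ this forces $f_2(y)=f_3(y)=0$ and $\nabla f_2(y)=-\nabla f_3(y)$. The origin is always such a point. For $y\ne 0$, the projective point $[y]$ lies in $A\cap\{f_2=0\}$ and satisfies $\nabla f_2([y])\parallel \nabla f_3([y])$, i.e.\ it is a non-transversal intersection of $A$ and the conic. Conversely, at a tangency $[y']$ with $\nabla f_2(y')=\mu\,\nabla f_3(y')$ (and $\mu\ne 0$ when both curves are smooth at $[y']$), the different degrees of the two gradients produce $\nabla f_2(ty')+\nabla f_3(ty')=t(\mu+t)\,\nabla f_3(y')$, which vanishes for the nonzero value $t=-\mu$. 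Hence $y=-\mu y'$ is a nonzero affine singular point, and the map $y\leftrightarrow[y]$ is bijective between extra affine singular points of $X$ and tangency points of $A$ with the conic $\{f_2=0\}$ in $\P^2$.

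Finally, I would match the ``node or cusp at the origin'' clause with the nondegeneracy of the conic via the rank of $f_2$: rank $3$ yields an $A_1$ node, rank $2$ yields an $A_2$ cusp precisely when the vertex of the reducible conic $\{f_2=0\}$ does not lie on $A$ (by the splitting lemma applied to $f_2+f_3$), while lower rank produces worse singularities or, for $f_2\equiv 0$, turns $X$ into a cone with non-isolated singular locus. On the transversality side, a singular point of $\{f_2=0\}$ that lies on $A$ would automatically destroy transversality there, so ``$A$ transversal to the conic'' rules out rank $1$ and rules out rank $2$ with the vertex on $A$; degeneracies $f_2\equiv 0$ or rank $1$ with the double line meeting $A$ are excluded by B\'ezout. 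Assembling the three steps gives the equivalence. The main obstacle I anticipate is precisely this last step: keeping the bookkeeping tidy across the degenerations of $\{f_2=0\}$ and verifying that ``transversal to the conic'' is interpreted to cover both a smooth conic and a pair of lines whose vertex is disjoint from $A$, which are exactly the cases that correspond on the singularity side to $A_1$ and $A_2$ at the origin.
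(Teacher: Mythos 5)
Your argument is correct. Note, though, that the paper itself offers no proof: it declares the criterion ``straightforward from definitions'' and points to \cite[Lemma 2.2]{4fold}, so there is nothing to match step by step; what you have done is supply the verification the authors leave to the reader. Your three ingredients are exactly the right ones: (i) homogenizing to $F=wf_2+f_3$ and reading off $\nabla F=(f_2(y),\nabla f_3(y))$ along $\{w=0\}$, which identifies ``transversal at infinity'' with nonsingularity of $\{f_3=0\}$ and confines any further singularities to the affine chart; (ii) the Euler-relation computation showing that an affine singular point $y\ne0$ forces $f_2(y)=f_3(y)=0$ with $\nabla f_2(y)=-\nabla f_3(y)$, hence a non-transversal intersection point $[y]$, together with the converse scaling trick $t=-\mu$ producing a singular point from any smooth--smooth tangency (you might state explicitly that $f_2(-\mu y')+f_3(-\mu y')=0$, which is immediate since $f_2(y')=f_3(y')=0$); and (iii) the rank analysis of $f_2$ via the splitting lemma, which correctly pins down $A_1$ for rank $3$, $A_2$ for rank $2$ with vertex off the cubic, and worse-than-cusp singularities otherwise --- and which matches the paper's intended reading of ``transversal to the conic'' as allowing a pair of distinct lines with vertex off $A$ (the case needed for the cuspidal, i.e.\ extended-wall, strata). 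The only blemishes are cosmetic: the case $f_2\equiv0$ is excluded by the paper's standing hypothesis that $f_2$ is a nonzero form rather than by B\'ezout, and for rank $1$ the cleanest exclusion is simply that every point of the double line is a singular point of the conic, so no intersection with $A$ (nonempty by B\'ezout) can be transversal. Neither affects the correctness of the equivalence.
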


\begin{proposition}\label{edges-to-pairs}
Pick any real nonsingular plane cubic curve $C$ and any label: ``$a$'' if $C_\R$ is connected, or ``$a,b$'' if it has two components.
Then the above construction gives a bijection between the set of extended walls representing
the edges of graphs on Figure \ref{Af2-Graph}
with the chosen label and the connected components of the space of real conics $B$ intersecting
transversely cubic $C$, with the number of intersection points
on each component of $C_\R$ prescribed by the label.
\end{proposition}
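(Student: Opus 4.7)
The plan is to promote the construction underlying Lemma~\ref{transversality} to the claimed bijection. Given a pair $(C,B)$ with $C$ a real non-singular plane cubic and $B$ a real conic (smooth or a pair of distinct lines) transverse to $C$, the affine cubic $\{f_2+f_3=0\}$, where $\{f_3=0\}=C$ and $\{f_2=0\}=B$, has an $A_1$-node if $B$ is smooth and an $A_2$-cusp if $B$ is degenerate, and is otherwise non-singular and transversal to infinity. The cuspidal stratum $\Delta^c_\R\-\Delta'_\R$ corresponds precisely to the case when $f_2$ is degenerate, so the two cases together cover the extended walls and the construction defines a map from pairs into extended walls.

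Next I would match the edge labels. A real line on the affine cubic through the origin corresponds to a direction $v\in\P^2_\R$ with $f_2(v)=f_3(v)=0$, that is, a real point of $B\cap C$. Thus the total number of such lines equals $\#(B_\R\cap C_\R)$, and its split by component of $C_\R$ (one-sided component versus oval) matches the decomposition $(a,b)$ of the label. Moreover, the topological type of $(X,A)$ on each side of the wall is determined locally by the signature of $f_2$ together with this intersection pattern, so the construction is consistent with the edge's two endpoint vertices as well.

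For the bijection itself, fix $C$ and the label $\ell$, and let $\mathcal M_\ell(C)$ denote the space of real conics transverse to $C$ with the prescribed intersection pattern. The assignment $B\mapsto$ (extended wall of $\{f_2+f_3=0\}$) is locally constant on $\mathcal M_\ell(C)$: a continuous path $B_t$ yields, via Theorem~\ref{generalized-family-statement}, a continuous family of singular cubics lying entirely in the nodal$\cup$cuspidal locus, hence in a single extended wall. Surjectivity: given any extended wall $W$ with label $\ell$, pick a cubic on $W$, translate its singularity to the origin, apply a real projective transformation to identify its section at infinity with $C$, and read off $B\in\mathcal M_\ell(C)$. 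Injectivity: two elements of $\mathcal M_\ell(C)$ mapping to the same wall can be joined by a path of singular cubics in $W$; after continuously normalizing the section at infinity to remain equal to $C$ along the family, this path descends to $\mathcal M_\ell(C)$ and shows that the two elements lie in the same component.

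The main obstacle is the normalization step used in both directions: along an arbitrary path of cubics in an extended wall one must choose a continuous family of real projective transformations bringing the varying section at infinity back to the fixed $C$. This reduces to path-connectedness of real non-singular plane cubics of a given topological type under the $\operatorname{PGL}_3(\R)$-action, together with the observation that the (discrete) stabilizer of $C$ in $\operatorname{PGL}_3(\R)$ preserves each connected component of $\mathcal M_\ell(C)$. A secondary point is non-emptiness: each label $\ell$ appearing in Figure~\ref{Af2-Graph} is realized by some $\mathcal M_\ell(C)\ne\emptyset$, which follows from explicit conic--cubic configurations such as those in \cite[Section 9.2]{FK}.
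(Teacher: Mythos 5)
Your construction, the identification of the edge label with the distribution of the real points of $B_\R\cap C_\R$ over the components of $C_\R$ (via the lines through the node), and the observation that $B\mapsto$ (extended wall of $\{f_2+f_3=0\}$) is locally constant on $\mathcal M_\ell(C)$ are all consistent with the paper's (very terse) argument, which rests on Lemma \ref{transversality}, Theorem \ref{TheoremB} and connectedness of the orientation-preserving affine group. Two small remarks: the locally constant step does not need Theorem \ref{generalized-family-statement} (which concerns blow-up models of \emph{nonsingular} surfaces); the explicit family $\{f_{2,t}+f_3=0\}$ together with Lemma \ref{transversality} already does the job. Also, reconstructing the surface from $(C,B)$ leaves the ambiguity $f_2+\lambda f_3$, $\lambda\ne 0$; positive $\lambda$ is absorbed by orientation-preserving rescalings (this, with translations, is what the paper's appeal to connectedness of the affine group covers), but the sign, i.e. the orientation-reversing map $x\mapsto -x$, deserves a word you do not give.

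The genuine gap is the normalization step used in both your surjectivity and injectivity arguments. A real nonsingular plane cubic is not projectively rigid: $\operatorname{PGL}_3(\R)$ has dimension $8$, the space of real nonsingular cubics of a fixed topological type has dimension $9$, and the $j$-invariant is a projective invariant. So the section at infinity of a point of the wall is in general \emph{not} projectively equivalent to the fixed $C$, and ``apply a real projective transformation to identify its section at infinity with $C$'' cannot be carried out; likewise, along a path inside an extended wall the curve at infinity changes its modulus, so no continuous family of projective transformations keeps it equal to $C$. Your proposed reduction to ``path-connectedness of cubics of a given topological type under the $\operatorname{PGL}_3(\R)$-action, with discrete stabilizer'' presupposes exactly this false homogeneity (the orbit of $C$ is a hypersurface in the space of cubics, not the whole space). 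What is needed instead is a deformation argument inside the wall: use connectedness of the space of real nonsingular plane cubics of the given topological type to join the curve at infinity to $C$, and carry the transverse conic along that path (transversality is open, but one must justify the lifting for the projection sending a transverse pair (cubic, conic) to the cubic); for injectivity one must further rule out that returning to the fibre over $C$ mixes components of $\mathcal M_\ell(C)$ — a monodromy question on $\pi_0$ of the fibres. This last point is ultimately harmless in the paper's scheme because $\mathcal M_\ell(C)$ turns out to be connected for all labels except $0,0$, and for $0,0$ its two components are distinguished by the sign pattern of the quadratic form on the components of $C_\R$, a datum preserved under any deformation; but some such argument must be supplied, and it does not follow from the $\operatorname{PGL}_3(\R)$-action alone.
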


\begin{proof} It is a straightforward consequence of Lemma \ref{transversality}, Theorem \ref{TheoremB}, and connectedness of the group of real affine transformations preserving orientation.
\end{proof}

\begin{remark} The only label marking more than one edge on Figure \ref{Af2-Graph}
is \scalebox{0.9}{``$0,0$''}.
\end{remark}

\subsection{Auxiliary covering}\label{aux}
Let $C$ be a nonsingular plane cubic curve. Denote by $C^{(n)}$ its $n$-fold symmetric power and by
${\dot{C}}^{(n)}$
its Zariski
open subset formed by collections of $n$ distinct points.
For any effective divisor of degree 5 on $C$, which we consider as a point $D\in C^{(5)}$, there is a unique conic $B$ that cuts on $C$ a divisor $B\cdot C \ge D$. This correspondence, $D\mapsto B\cdot C\in C^{(6)}$,
defines a regular map $f:C^{(5)}\to C^{(6)}$ whose image $V=f(C^{(5)})\subset C^{(6)}$ is a codimension 1 smooth subvariety of  $C^{(6)}$  (in fact, this image is isomorphic to $\P^5$ due to its identification
with the space of plane conics).

In accordance with Proposition \ref{edges-to-pairs}, we are especially interested in a Zariski open subset of $V$ defined as
$\dot{V} =f(C^{(5)})\cap\dot{C}^{(6)}=f(\dot{ C}^{(5)})\cap\dot{C}^{(6)}$.

If the cubic curve $C$ is real,
the (open) manifold $\dot{C}^{(n)}_\R=\{D\in \dot{C}^{(n)}: \conj D=D \}$ splits
into several connected components enumerated by the number of points in  $D\in \dot{C}^{(n)}_\R$
on each of the connected components of $C_\R$.
Namely,
we have a partition
$$ \dot{C}^{(n)}_\R= \begin{cases}\ \ \displaystyle
\bigsqcup_{\i\ge 0,\ \i=n\, mod\, 2}\ \dot{C}^{\i}_\R,
 \qquad\text{if $C_\R$ is connected,} \\
\displaystyle\bigsqcup_{\i,\j\ge 0,\ \i+\j=n\, mod\, 2}\hskip-2mm \dot{C}^{\i,\j}_\R\qquad\text{if $C_\R$ has two components,}
\end{cases}$$
where $\i$ stands for the number of points of $D\in \dot{C}^{(n)}_\R$ on the 1-sided component of $C_\R$ and $\j$ for that on the oval.

In the case $n=6$, for each $a=0\mod2, a\ge 0$ and, respectively, each $a+b=0\mod 2$ with $a,b\ge 0$, we put
$$
\dot{V}^{a}_\R= V\cap  \dot{C}^a_\R, \quad \dot{V}^{a,b}_\R= V\cap  \dot{C}^{a,b}_\R.
$$

The above map $f$ respects
this partition. It restricts to a collection of maps
$$\begin{cases}
\dot{f}
: \ \dot{C}^{\i}_\R\to V_\R,\ \quad\text{if $C_\R$ is connected},\\
\dot{f} :
\dot{C}^{\i,\j}_\R\to V_\R,\quad\text{if $C_\R$ has two components.}
\end{cases}
$$
Note that $\dot{f}(\dot{C}^\i_\R)$ and $\dot{f}(\dot{C}^{\i,\j}_\R)$
are contained in the closure of $\dot{V}^{\i+1}_\R$
 and, respectively,
$\dot{V}^{\bar \i, \bar \j}_\R$, where
for $x=\i,\j$ we let
$\bar x:= x+1$ if $x$ is odd and $\bar x=x$ if even.

\begin{lemma}\label{R-branching}
Each restriction map
$\dot{f} :\dot{C}^{\i}_\R\to V_\R$  or
$\dot{f} : \dot{C}^{\i,\j}_\R\to V_\R$ is smooth
with only fold singular points.
Namely,
$\dot{f}$ is a local diffeomorphism at
a point $D$ if $\dot{f}(D)\in \dot{V}_\R$, while for
$\dot{f}(D)\in V_\R\sm\dot{V}_\R$ the map $\dot{f}$ is given
by $\dot{f}(x_1,\dots, x_4, y)=(x_1,\dots, x_4, y^2)$ in appropriate local coordinates of the domain and codomain.
Moreover, $\dot{f}^{-1}(\dot{f}(D))=D$ for every fold point $D$ (a point with $\dot{f}(D)\in V_\R\sm\dot{V}_\R$).
\qed
\end{lemma}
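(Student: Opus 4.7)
The plan is to realize $V$ as a smooth 5-fold via the embedding $\P^5 \hookrightarrow C^{(6)}$, $B \mapsto B \cdot C$, and then analyze $f: C^{(5)} \to V$ at each point by direct computation in local real-analytic coordinates. The embedding is injective (a conic is determined by any 5 of its intersection points with the cubic $C$) and immersive (a nontrivial first-order deformation of $B$ must move the intersection divisor, since five intersection points determine $B$); consequently $V$ is smooth of dimension $5$ and $f$ is a finite morphism of degree $6$ between smooth $5$-folds.

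For $D \in \dot{C}^{(5)}_\R$ with $\dot{f}(D) \in \dot{V}_\R$, choose local real-analytic coordinates $u_i$ on $C$ near each of the six distinct points of $f(D)$. The sixth intersection of the unique conic through $(p_1+u_1,\ldots,p_5+u_5)$ is an analytic function $u_6(u_1,\ldots,u_5)$ and $f(u_1,\ldots,u_5) = (u_1,\ldots,u_5,u_6(u_1,\ldots,u_5))$, exhibiting $V$ locally as a graph over $C^{(5)}$; hence $\dot{f}$ is a local real-analytic diffeomorphism at $D$.

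For $D_0 \in \dot{C}^{(5)}_\R$ with $\dot{f}(D_0) \in V_\R \sm \dot{V}_\R$, the conic $B$ through $D_0$ is simply tangent to $C$ at one real point $p_1$, so $f(D_0) = 2p_1 + p_2 + \ldots + p_5$. Take coordinates $(s,t,u_2,\ldots,u_5)$ on $C^{(6)}$ near $f(D_0)$, with $s = u_1 + u_6$ and $t = u_1 u_6$ the elementary symmetric functions of the two branches near $p_1$. The Abel--Jacobi description of $V$ as a fiber of the sum map $\sigma: C^{(6)} \to \mathrm{Pic}^6(C)$ (a smooth submersion since $C$ has genus $1$) gives the linearized equation $s + u_2 + \ldots + u_5 = 0$ for $V$ at $f(D_0)$; consequently $(t,u_2,\ldots,u_5)$ are real-analytic coordinates on $V$, and on $V$ we have $u_6 = -u_1 - (u_2+\ldots+u_5) + O(u^2)$. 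Substituting,
$$t = -u_1(u_1 + u_2 + \ldots + u_5) + O(u^3),$$
whose Hessian at the origin is nondegenerate on $\ker df|_{D_0} = \R \cdot \partial/\partial u_1$ (the $u_1^2$-coefficient is $-1 \neq 0$). Completing the square via $y = u_1 + \tfrac{1}{2}(u_2+\ldots+u_5)$ and setting $T = \tfrac{1}{4}(u_2+\ldots+u_5)^2 - t$ (a real-analytic change of the $t$-coordinate on $V$) yields $T = y^2 + O(y^3,\ y \cdot u_j)$; the real-analytic Morse lemma in $y$ (with $u_2,\ldots,u_5$ as parameters) then produces a further coordinate adjustment giving the exact fold form $\dot{f}(x_1,\ldots,x_4,y) = (x_1,\ldots,x_4,y^2)$.

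The fiber claim $\dot{f}^{-1}(\dot{f}(D_0)) = \{D_0\}$ follows directly: $f(D) = 2p_1 + p_2 + \ldots + p_5$ forces $D \leq B \cdot C$, so $D$ is either $D_0 = p_1 + p_2 + \ldots + p_5$ or $D_j = 2p_1 + p_2 + \ldots + \widehat{p_j} + \ldots + p_5$ for some $j \in \{2,\ldots,5\}$, and only $D_0$ has five distinct points. The main technical ingredient is the Abel--Jacobi linearization $u_1+\ldots+u_6 = \mathrm{const}$ cutting $V$ locally; once this is in hand, the fold structure follows by routine local computation, and real-analyticity of every step guarantees that the normal form descends to each real stratum $\dot{C}^r_\R$ or $\dot{C}^{r,s}_\R$.
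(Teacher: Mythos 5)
Your proof is correct and follows essentially the same route as the paper: the paper's (one-line) argument is precisely that the sixth point is determined by the group law $p_1+\dots+p_6=0$ on the cubic (Abel--Jacobi), from which the local diffeomorphism/fold structure and the fiber statement follow. Your write-up simply carries out in detail the local computation (viewing $V$ as a fiber of the sum map $C^{(6)}\to\operatorname{Pic}^6(C)$, passing to symmetric coordinates at the doubled point, and completing the square) that the paper leaves implicit.
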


\begin{proof}
Straightforward from the Abel-Jacobi theorem implying that
for each 5-tuple $D=\{p_1,\dots,p_5\}$ in the domain of $\dot{f}$ the 6th point in the 6-tuple $\dot{f}(D)=\{p_1,\dots,p_6\}$
is determined by the condition $p_1+\dots+p_6=0$ with respect to
the group structure in $C$ with a flex-point of $C$ taken for zero.
\end{proof}

The following result is an immediate consequence of Lemma \ref{R-branching}.

\begin{proposition}\label{interior}
Each image $\dot{f}(\dot{C}^{\i}_\R)$,
$\dot{f}(\dot{C}^{\i,\j}_\R)$ is a manifold with the boundary formed by
fold-points of $\dot{f}$
and the interior part being $\dot{V}^{\i+1}_\R$, $\dot{V}^{\bar \i, \bar \j}_\R$ respectively.
\qed\end{proposition}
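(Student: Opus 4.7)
The plan is to read off the proposition directly from Lemma \ref{R-branching} by covering the image with the two types of local charts that lemma supplies.

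For the manifold structure, I would work pointwise on the domain. At a regular point $D$ (one with $\dot f(D)\in \dot V_\R$), Lemma \ref{R-branching} provides a local diffeomorphism from a neighbourhood of $D$ onto an open subset of $V_\R$, so the image is locally an open piece of the smooth manifold $V_\R$. At a fold point the lemma gives the model $(x_1,\dots,x_4,y)\mapsto (x_1,\dots,x_4,y^2)$, whose image is the closed half-space $\{z\ge 0\}$ with boundary $\{z=0\}$. These two kinds of local pictures clearly glue to a manifold-with-boundary structure on the global image, with the boundary being the fold image.

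To identify the interior with the claimed stratum, I would use the Abel--Jacobi description of $\dot f$. For a regular $D=\{p_1,\dots,p_5\}$, the sixth point $p_6=-(p_1+\cdots+p_5)$ is real and hence lies on $C_\R$. In the connected case this simply raises the real-point count from $\i$ to $\i+1$, so the local image lands in $\dot V^{\i+1}_\R$. In the two-component case, since $\i+\j$ is odd exactly one of $\i,\j$ is odd, while the indices of any point in $\dot C^{(6)}_\R$ are both even; therefore $p_6$ is forced onto the component currently carrying an odd number of points, landing the image in $\dot V^{\bar\i,\bar\j}_\R$. Because any fold image lies in $V_\R\sm\dot V_\R$ (it has a repeated point), fold images are automatically disjoint from the interior stratum, so they form exactly the boundary of the image.

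The only potential obstacle is bookkeeping. One must check that the half-space charts at distinct fold points do not collide, which is immediate from the last clause of Lemma \ref{R-branching} asserting $\dot f^{-1}(\dot f(D))=D$ at every fold point; and one must check that the global image does not extend beyond $\dot V^{\bar\i,\bar\j}_\R$ together with its fold boundary, which follows because every image point sits in $\dot V_\R$ or $V_\R\sm\dot V_\R$ and the parity count pins down the correct component in each case. The only ingredient beyond the normal forms already recorded in Lemma \ref{R-branching} is this parity identification, which is the one place where care is needed.
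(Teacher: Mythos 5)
Your overall route is exactly the paper's: Proposition \ref{interior} is stated there as an immediate consequence of Lemma \ref{R-branching}, and your chart-by-chart reading of the fold normal form (local diffeomorphism at regular points, half-space model at fold points, injectivity over fold values, disjointness of fold images from $\dot{V}_\R$) is the intended argument.

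There is, however, a misstep in the one place you yourself flag as delicate. The claim that ``the indices of any point in $\dot{C}^{(6)}_\R$ are both even'' is false: the constraint $a+b\equiv 0\ (\mathrm{mod}\ 2)$ only forces $a$ and $b$ to have equal parity, and $6$-tuples with $a=b=3$ or $a=1$, $b=5$ certainly exist (the paper uses such tuples elsewhere, e.g.\ $\SS_{3,3}$ and $\SS_{1,5}$). What you actually need, and all you need, is the statement for points of $V_\R$: a divisor cut on $C$ by a real conic meets the oval in an even number of points. This holds either by B\'ezout mod $2$ (the oval is null-homologous in $\P^2_\R$), or, in the language you invoke, because the one-sided component is an index-two subgroup of $C_\R$ and the oval its nontrivial coset, so the relation $p_1+\dots+p_6=0$ forces an even number of the $p_i$ onto the oval; then $p_6$ indeed lands on the component carrying an odd number of $p_1,\dots,p_5$, which is the containment in the closure of $\dot{V}^{\bar \i,\bar \j}_\R$ that the paper records just before Lemma \ref{R-branching}. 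A second, smaller omission: you only argue that the regular part of the image is \emph{contained} in $\dot{V}^{\i+1}_\R$, resp.\ $\dot{V}^{\bar \i,\bar \j}_\R$, whereas the proposition asserts the interior \emph{is} that stratum; this needs the trivial surjectivity remark that any point of the stratum is hit by $\dot{f}$, obtained by deleting from the $6$-point conic divisor one real point on the appropriate component (possible because $\i+1\ge 1$, resp.\ because the barred index is at least $2$ when it arises from an odd one), the conic being recovered uniquely from the remaining five points.
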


\subsection{Proof of Theorem \ref{A2-edges-quasisimplicity}}\label{connectedness}
Let us treat, first, the case of edges with labels
$a$ and $a,b$
different from $0$ and $0,0$.
In such a case, according to Proposition  \ref{edges-to-pairs}, it is sufficient to show that each of the manifolds
$\dot{V}^a_\R$ and $\dot{V}^{a,b}_\R$ is connected.

To prove their connectedness, we put $a=\i +1$ or $a,b=\bar\i,\bar\j $,
respectively, and
deduce connectedness of $\dot{f}(\dot{C}^{\i}_\R)$ and $ \dot{f}(\dot{C}^{\i,\j}_\R)$ from
connectedness of $\dot{C}^{\i}_\R$ and $\dot{C}^{\i,\j}_\R$ (already observed in Section \ref{aux}).
By virtue of Proposition  \ref{interior}, this implies connectedness of $\dot{V}^{\i+1}_\R=\dot{V}^a_\R$ and $\dot{V}^{\bar \i, \bar \j}_\R=\dot{V}^{a,b}_\R$.

For the case of labels $0$ or $0,0$,
we observe that
the set of real quadratic forms
taking a fixed sign on each component
of $C_\R$ is convex and that
non-transversality with $C$ is a codimension 2 condition (since it happens simultaneously at pairs of complex-conjugate points).
Due to Proposition  \ref{edges-to-pairs}, this gives
one extended wall for connected $C_\R$ and two walls for disconnected. This corresponds to
one edge
labeled with $0$ and, respectively,
two edges labeled $0,0$ on Figure \ref{Af2-Graph}.

\section{Concluding remarks}
\subsection{Intersecting a cubic by curves of arbitrary degree}
Our proof of Theorem \ref{A2-edges-quasisimplicity} given in Section \ref{preliminaries} is based on
enumeration of connected components of the space of real conics intersecting transversally a fixed nonsingular real cubic $C\subset\P^2$.
If we replace conics by curves of arbitrary degree $d\ge1$,
we may similarly define a map $f:C^{(3d-1)}\to C^{(3d)}$, let $V=f(C^{(3d-1)})$,
consider a similar partition
of $\dot{V}_\R$ into $\dot{V}^{a}$ or $\dot{V}^{a,b}$, and prove connectedness of
$\dot{V}^{a,b}$  for $(a,b)\ne(0,0)$ and
of $\dot{V}^{a}$ for any $a$.

The case $(a,b)=(0,0)$ is
possible only
if $d$ is even. Then the same sign and convexity arguments as for $d=2$ apply and show that for any even $d$ the space $\dot{V}^{0,0}$, that is the space of real nonsingular plane curves of degree $d$ transversal to $C$ and having no
real points common with $C$, consists of 2 connected components. The latter ones are distinguished by comparison
of the signs that an underlying real form of degree $d$ takes on the real components of $C$.

\subsection{Number of ordinary walls
in an extended wall}\label{ordinary-walls}
Recall, that by definition (see Introduction) the "ordinary" walls are the connected components of $\D_\R \sm \D'_\R$, while
the extended walls are the connected components of a bigger space obtained by adding the cuspidal strata.
As it is natural to expect,
 there exist extended walls that contain more than one ordinary wall.
A complete answer is given in Table \ref{table} that
indicates the precise
number of ordinary walls in each of the extended walls. The latter ones are marked with the labels as in the previous section (like the corresponding
edges on Figure \ref{Af2-Graph}). Note that label $0,0$ is attributed to two different extended walls, so, ``1'' under ``$0,0$''
says that each of them contains just one ordinary wall.

\begin{table}[h!]
\caption{Number of ordinary walls in each extended wall}\label{table}
\vskip-3mm
\scalebox{0.94}{\boxed{
\begin{tabular}{c|c|c|c|c|c|c|c|c|c|c|c|c|c|c}
\!\!extended wall label\!&0&2&4&6&0,0&0,2&2,0&0,4&2,2&4,0&0,6&2,4&4,2&6,0\\
\hline
number of walls&1&1&1&3&1&1&2&1&1&2&1&2&1&3
\end{tabular}}}
\end{table}

In fact, the set of ordinary walls is
in a natural bijective correspondence with
the set of deformation classes
of real plane quintics that split into a nonsingular cubic $C$ and nonsingular conic $B$
meeting each other transversally ({\it cf.} Lemma \ref{transversality}).
Therefore,
the results shown in the table can be obtained from \cite[Section 9.2]{FK}, where the latter classification was presented as follows.

\vskip-3mm
\begin{figure}[h!]
\caption{The extremal mutual positions of a cubic and a conic} \label{2+3}
\hbox{\includegraphics[width=1\textwidth]{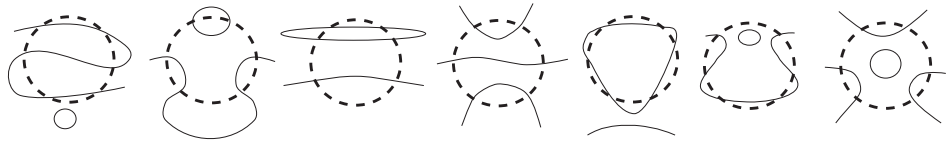}}
\end{figure}
\vskip-3mm

On one hand each deformation class is determined by the topology of the
mutual arrangement of $C_\R$ and $B_\R$ in the real projective plane. On the other hand,
as it was shown by G.~Polotovsky \cite{Pol},
 there are 25 such arrangements:
7 extremal classes  are shown in Figure \ref{2+3}, while the other, non-extremal, cases
are obtained from the extremal ones by repeating
the following moves. One move consists in
erasing an oval (of the cubic or of the conic)
containing no intersection points,
and
the other in shifting a piece of curve containing
a pair of consecutive (both on the conic and the cubic) intersection points,
so that these two intersections disappear.

\subsection{Vasilliev's conjectures}
The deformation classification in this paper
is naturally related to enumeration of connected components of
non-discriminant
perturbations of
parabolic
singularities
of type $P_8$. In a recent preprint \cite{Vas}  V.A.~Vasilliev  provided lower bounds on the number of such components for each of the 8 classes of parabolic singularities and conjectured that his bounds are sharp. Our results confirm partially this conjecture.

\end{document}